\documentclass{amsart}
\usepackage{amssymb,latexsym}
\usepackage{amscd,amsthm}

\usepackage[all]{xy}
\usepackage{tikz}

\usepackage{tikz-cd}

\usepackage{extpfeil}
\newtheorem{thm}{\bf Theorem}[section]
\newtheorem{cor}[thm]{\bf Corollary}

\newtheorem{lem}[thm]{\bf Lemma}
\newtheorem{prop}[thm]{\bf Proposition}

\newtheorem{rem}[thm]{\bf Remark}

\DeclareMathOperator{\Ext}{Ext}
\DeclareMathOperator{\Hom}{Hom}

\DeclareMathOperator{\Qcoh}{Qcoh}

\newcommand{\cqfd}
{\hspace{1cm}
\rule{2mm}{2mm}%
\medbreak%
\par%
}

\def\fd{{\rm fd}}
\def\id{{\rm id}}

\def\Gfd{{\rm Gfd}}
\def\Gid{{\rm Gid}}

\def\resdim{{\rm resdim}}

\def\flat{\rm Flat}
\def\cot{\rm Cot}
\def\inj{\rm Inj}

\def\Q{\mathcal{Q}}
\def\W{\mathcal{W}}

\def\R{\mathcal{R}}

\def\M{\mathcal{M}}

\def\A{\mathcal{A}}

\def\X{\mathcal{X}}
\def\Y{\mathcal{Y}}
\def\O{\mathcal{O}}

\def\GF{\rm GFlat}

\def\GI{\rm GInj}

\def\Qcoh{\rm Qcoh}

\def\Ker{{\rm Ker}}

\def\Ho{{\rm Ho}}

\def\Ext{{\rm Ext}}

\def\Hom{{\rm Hom}}

\begin{document}

\title[Approximations and Hovey triples by objects of finite homological dimensions]{Approximations and Hovey triples by objects of finite homological dimensions: Applications to sheaves}

\author{Rachid El Maaouy}
\address{SMARTiLab, Moroccan School of Engineering Sciences (EMSI), Rabat, Morocco}

\email[Rachid El Maaouy]{elmaaouy.rachid@gmail.com}

\author{Hanane Ouberka}
 
\address{Laboratoire Interdisciplinaire de Modélisation Simulation et Innovation Scientifique (LIMSIS), Département des sciences École normale supérieure, Moulay Ismaïl University of Meknès}

\email[Hanane Ouberka]{H.ouberka@umi.ac.ma}

\date{\today}

\keywords{(pre)cover, (pre)envelope, cotorsion pair, abelian model structure, Gorenstein sheaves and dimension} 

\thanks{2020 Mathematics Subject Classification. 18N40, 18G20, 18F20.
}

\begin{abstract}
Let $\mathcal{Q}$ be a class of objects in an abelian category $\mathcal{A}$ which need not have enough projective or injective objects. In this paper, we prove that if $\mathcal{Q}$ is the first class of a Hovey triple $(\mathcal{Q},\mathcal{W},\mathcal{R})$ in $\mathcal{A}$ satisfying certain assumptions—weaker than those required in the recent literature—then $\mathcal{Q}_n$, the class of objects with $\mathcal{Q}$-resolution dimension at most an integer $n\ge 0$, forms the first class of a hereditary Hovey triple $\mathcal{M}_n=(\mathcal{Q}_n,\mathcal{W}_{\mathcal{Q},n},\mathcal{R}_{\mathcal{Q},n})$, where $\mathcal{W}_{\mathcal{Q},n}$ and $\mathcal{R}_{\mathcal{Q},n}$ are described explicitly. Consequently, $\mathcal{Q}_n$ is the left-hand side of a complete hereditary cotorsion pair and hence a special precovering class. The dual statement is also established. As a main application, we construct an abelian model structure on $\Qcoh(X)$, the category of quasi-coherent sheaves over a semi-separated Noetherian scheme $X$, in which the cofibrant (resp. fibrant) objects are precisely the sheaves with Gorenstein flat (resp. Gorenstein injective) dimension at most $n$.

\end{abstract}

\maketitle

\section{Introduction} 

In relative homological algebra, it is of fundamental importance to know whether a class of objects in an abelian category is (pre)covering or (pre)enveloping. In homotopical algebra, it is equally important to determine whether such a class is part of a (complete) cotorsion pair or a Hovey triple.

Approximations and complete cotorsion pairs by modules of finite classical (Gorenstein) dimension have been widely studied in the literature over recent years; see \cite[Section 8.1]{GT12} and \cite[Chapters III and IV]{Per16}. For some recent developments in this direction, we refer the reader to \cite{Elm24,Elm25,SWZ25,GLZ25}. The interest in these classes has recently motivated many researchers to investigate the following general problem: when are the homological and homotopical properties of a class $\mathcal{Q}$ in an abelian category inherited by $\mathcal{Q}_n$, the class of all objects with $\mathcal{Q}$-(co)resolution dimension at most an integer $n\ge 0$? For instance, El Maaouy asked in \cite{Elm25}, when the following assertions (and their duals) hold:
\begin{enumerate}
    \item If $\mathcal{Q}$ is (special pre)covering, so is $\mathcal{Q}_n$.
    \item If $\mathcal{Q}$ is the left‑hand part of a (complete) cotorsion pair, so is $\mathcal{Q}_n$.
    \item If $\mathcal{Q}$ is the first component of a Hovey triple, so is $\mathcal{Q}_n$.
\end{enumerate}

For the definitions of approximations, cotorsion pairs, and Hovey triples/abelian model structures, and how they are related to each other, we refer the reader to Subsections \ref{Subsec. approx}, \ref{Subsec. cot-pair}, and \ref{Subsec. Hov-triple}. 

An answer to the above questions is provided in \cite[Theorem 3.12]{Elm25}, which states that when the class $\Q$ is part of a Hovey triple $\M=(\Q,\W,\R)$ satisfying some assumptions, then the class $\Q_n$ is also part of a Hovey triple $\M_n=(\Q_n,\W,\Q_{\W,n}^\perp)$ where $\Q_{\W,n}$ denotes the class of objects with $(\mathcal{Q}\cap \W)$-resolution dimension at most $n\geq 0$. 
As a consequence, the pair $(\Q_n,\W\cap \Q_{\W,n}^\perp)$ is a complete and hereditary cotorsion pair, and $\Q_n$ is a special precovering class. One of the assumptions required in \cite[Theorem 3.12]{Elm25} is that the class $\W$ of  objects is part of a complete cotorsion pair $(^\perp\W,\W)$. Unfortunately, this is not always the case, especially for some Hovey triples constructed using \cite[Theorem 1.1]{Gil15}. In very recent work, Shao, Wang, and Zhang have removed this assumption in \cite[Theorem 3.6]{SWZ25} at the cost of assuming that the category has enough projective and injective objects. However, in algebraic geometry, it is known that categories of sheaves do not, in general, have enough projective objects; thus, it is not possible to apply this last result in such categories.

The goal of this paper is twofold. First, we significantly improve the two results \cite[Theorem A]{Elm25} and \cite[Theorem 3.6]{SWZ25}. In particular, we remove the first assumption of \cite[Theorem A]{Elm25} without requiring that the category $\mathcal{A}$ has enough projective or injective objects, a condition that was essential in \cite[Theorem 3.6]{SWZ25}. As the first main result of this paper, we prove the following theorem.

\bigskip

\noindent\textbf{Theorem A.} Let $(\mathcal{Q},\mathcal{W},\mathcal{R})$ be a hereditary Hovey triple in an abelian category $\mathcal{A}$. For every integer $n\ge 0$, the following assertions hold.
\begin{enumerate}
    \item If $\bigl(\mathcal{Q}_{\mathcal{W},n},\mathcal{Q}_{\mathcal{W},n}^\perp\bigr)$ is a complete cotorsion pair, then
    \[
    \bigl(\mathcal{Q}_n,\ \mathcal{W},\ \mathcal{Q}_{\mathcal{W},n}^\perp\bigr)
    \]
    is a hereditary Hovey triple on $\mathcal{A}$. In this case, $(\mathcal{Q}_n,\mathcal{Q}_n^\perp)$ is a complete hereditary cotorsion pair with $\mathcal{Q}_n^\perp = \mathcal{W} \cap \mathcal{Q}_{\mathcal{W},n}^\perp$, and $\mathcal{Q}_n$ is a special precovering class.
    
    \item If $\bigl({}^\perp\!\mathcal{R}_{\mathcal{W},n},\ \mathcal{R}_{\mathcal{W},n}\bigr)$ is a complete cotorsion pair, then
    \[
    \bigl({}^\perp\!\mathcal{R}_{\mathcal{W},n},\ \mathcal{W},\ \mathcal{R}_n\bigr)
    \]
    is a hereditary Hovey triple on $\mathcal{A}$. In this case, $({}^\perp\!\mathcal{R}_n,\mathcal{R}_n)$ is a complete hereditary cotorsion pair with ${}^\perp\!\mathcal{R}_n = \mathcal{W} \cap {}^\perp\!\mathcal{R}_{\mathcal{W},n}$, and $\mathcal{R}_n$ is a special preenveloping class.
\end{enumerate}

\bigskip

Our second main goal is to provide some applications in $\Qcoh(X)$, the category of quasi-coherent sheaves of $\mathcal{O}_X$-modules, where $(X,\mathcal{O}_X)$ is a ringed space—a category that is abelian but generally lacks enough projective objects. Applying Theorem A to the two abelian model structures recently constructed on $\Qcoh(X)$:
\begin{enumerate}
    \item \textbf{Gorenstein flat model structure} $\mathcal{M}_{\GF}=(\GF(X),\mathcal{W}_{\mathrm{flat}},\flat(X)^\perp)$, constructed by Christensen, Estrada, and Thompson \cite[Theorem 2.5]{CET21}; and
    \item \textbf{Gorenstein injective model structure} $\mathcal{M}_{\GI}=({}^\perp\!\inj(X),\mathcal{W}_{\mathrm{inj}},\GI(X))$, constructed by Estrada and Gillespie \cite[Theorem 5.2]{EG25},
\end{enumerate}
we obtain our second main result.

\bigskip

\noindent\textbf{Theorem B} (Theorems \ref{n-GF-Hovey-triple} and \ref{GIn-cot-pair-Hov-trip})
Let $X$ be a semi‑separated Noetherian scheme. For every integer $n\ge 0$, the following assertions hold on $\Qcoh(X)$:
\begin{enumerate}
    \item There exists a hereditary abelian model structure
          \[
          \mathcal{M}_{\GF_n}= \bigl( \GF_n(X),\ \mathcal{W}_{\mathrm{flat}},\ \flat_n(X)^\perp \bigr).
          \]
          Consequently, $(\GF_n(X),\GF_n(X)^\perp)$ is a complete hereditary cotorsion pair with
          $\GF_n(X)^\perp = \mathcal{W}_{\mathrm{flat}} \cap \flat_n(X)^\perp$, and $\GF_n(X)$ is a special precovering class.
    \item There exists a hereditary abelian model structure
          \[
          \mathcal{M}_{\GI_n}= \bigl( {}^\perp\!\inj_n(X),\ \mathcal{W}_{\mathrm{inj}},\ \GI_n(X) \bigr).
          \]
          Consequently, $({}^\perp\!\GI_n(X),\GI_n(X))$ is a complete hereditary cotorsion pair with
          ${}^\perp\!\GI_n(X) = {}^\perp\!\inj_n(X) \cap \mathcal{W}_{\mathrm{inj}}$, and $\GI_n(X)$ is a special preenveloping class.
\end{enumerate}

The results from Section~\ref{Main result} that are used to prove Theorem B also yield some consequences of independent interest. For instance, we obtain in Proposition~\ref{n-GF sheaf} several characterizations of when a sheaf $M$ has Gorenstein flat dimension at most $n$. This leads, for instance, to a non‑affine answer to the question of when the Gorenstein flat dimension refines the classical flat dimension (see Corollary~\ref{GF-ref}). Parallel results for Gorenstein injective sheaves are also established (see Proposition~\ref{n-GI sheaf} and Corollary~\ref{GI-ref}).

\section{Preliminaries}

 In this section, we recall some necessary notions and definitions. Throughout the paper, $\A$ will denote an abelian category. By the term \textacutedbl subcategory\textgravedbl, we mean a full additive subcategory closed under isomorphisms, and all classes of objects in $\A$ may be regarded as subcategories of $\A$. 
 
 Let $\X$ be a fixed class of objects in $\A$. In what follows, we use the following standard notations:$$\X^\perp=\{A\in\A|\Ext^1_\A(X,A)=0,\forall X\in\X\},$$
$$^\perp\X=\{A\in\A|\Ext^1_\A(A,X)=0,\forall X\in\X\}.$$

A complex $\textbf{X}$ in $\A$ is called $\Hom_\A(\mathcal{X},-)$-exact (resp., $ \Hom_\A(-,\mathcal{X})$-exact) if $\Hom_\A(X,\textbf{X})$ (resp., $\Hom_\A(\textbf{X},X)$) is an exact complex for every object $X\in\mathcal{X}$.

\subsection{(Co)resolution dimensions}\label{Subsec. res-dim}  Recall from \cite{AB89} that an $\X$-resolution of an object $A\in \A$ is an exact sequence 
 $\cdots \to X_1\to X_0\to A\to 0$
with each $X_i\in\X$. An object $A\in\A$ is said to have $\X$-resolution dimension at most an integer $n \geq  0$, $\resdim_\X(A)\leq  n$, if $A$ has a finite $\X$-resolution: 
$$0\to X_n\to \cdots \to X_1\to X_0\to A\to 0.$$
If $n$ is the least non-negative integer for which such a sequence exists, then its $\X$-resolution dimension is precisely $n$. If there is no such $n$, then we define its $\X$-resolution dimension as $\infty$. One can define $\X$-coresolutions and $\X$-coresolution dimensions dually.

Given a triple $(\Q,\W,\R)$ of classes of objects in $\A$, in what follows, we use the notation $\Q_\W:=\Q\cap \W$ and  $\R_\W:=\W\cap \R$. Moreover, for a given integer $n\geq 0$, we denote by $\Q_{\W,n}:=(\Q_{\W})_n$ (resp.,  $\R_{\W,n}:=(\R_{\W})_n$) the class of objects having $\Q_\W$-resolution (resp., $\R_\W$-coresolution) dimension at most $n$.

\subsection{Approximations.}\label{Subsec. approx}

An $\mathcal{X}$-precover of an object $A\in\mathcal{A}$ is a morphism $f:X\to A$ with $X\in\mathcal{X}$ such that $f_*:\Hom_\mathcal{A}(X',X)\to \Hom_\mathcal{A}(X',A)$ is surjective for every $X'\in\mathcal{X}$. An $\mathcal{X}$-precover $f$ is called special if it is an epimorphism and $\Ker f\in\mathcal{X}^\perp$. If every object has a (special )$\mathcal{X}$-precover, then the class $\mathcal{X}$ is said to be (special) precovering. (special) $\mathcal{X}$-preenvelopes can be defined dually. Standard references for approximations in module categories include \cite{EJ00} and \cite{GT12}, while \cite{EO02} and \cite{Per16} serve as good references in abelian categories.

\subsection{Cotorsion pairs} \label{Subsec. cot-pair} A pair $\left(\X,\Y\right)$ of classes of objects in $\A$ is called a cotorsion pair if $\X^\perp=\Y$ and $\X=\,^\perp\Y$. A cotorsion pair $(\X, \Y)$ is complete if 

(a)  $(\X, \Y)$ is left complete, i.e., any object in $\A$ has a special $\X$-precover, and 

(b)  $(\X, \Y)$ is right complete, i.e., any object in $\A$ has a special $\Y$-preenvelope.  

A cotorsion pair $(\X, \Y)$ is hereditary if $\Ext^i_\A(X,Y)=0$ for all $X\in\X$, $Y\in\Y$, and $i\geq 1$. It follows from \cite[Lemma 6.17]{Sto14} that a complete cotorsion pair $(\X,\Y)$ is hereditary if and only if $\X$ is closed under kernels of epimorphisms, and if and only if $\Y$ is closed under cokernels of monomorphisms. 

A class of objects $\X\subseteq \A$ is said to be generating (resp., cogenerating) if every object of $\A$ is a quotient (resp., subobject) of an object from $\X$. For instance, in any complete cotorsion pair $(\X,\Y)$ in $\A$, the class $\X$ is generating, and the class $\Y$ is cogenerating.  For any cotorsion pair $(\X,\Y)$ in $A$,  the class $\X$
is always generating if $\A$ has enough projectives, and the class $\Y$ is always cogenerating if $\A$ has enough  injective objects.

\begin{lem}\label{lem-left-right-complete}
Let $(\X,\Y)$ be a cotorsion pair in $\A$. 
\begin{enumerate}
\item Assume $\Y$ is cogenerating. If  $(\X,\Y)$ is left complete, then so is right complete.
 \item Assume $\X$ is generating. If $(\X,\Y)$ is right complete, then so is left complete. 
\end{enumerate}

\end{lem}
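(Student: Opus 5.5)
This is the standard Salce-type argument, adapted to the case where $\A$ need not have enough projectives or injectives; the role of those is played by the (co)generating hypothesis. I will prove (1) in detail; (2) is the exact dual, obtained by reversing arrows and interchanging the roles of $\X$ and $\Y$.

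For (1), fix $A\in\A$. Since $\Y$ is cogenerating, there is a monomorphism $A\hookrightarrow Y$ with $Y\in\Y$. This gives a short exact sequence
\[0\to A\to Y\to C\to 0,\]
where $C$ is the cokernel. By left completeness, $C$ has a special $\X$-precover
\[0\to Y'\to X\xrightarrow{p} C\to 0,\]
with $X\in\X$ and $Y'\in\Y$.

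Next, form the pullback $P$ of $Y\to C$ along $p:X\to C$. In an abelian category this produces two short exact sequences:
\[0\to A\to P\to X\to 0 \quad\text{and}\quad 0\to Y'\to P\to Y\to 0.\]
The kernel identifications are the usual ones for pullbacks along epimorphisms. Both $Y\to C$ and $p$ are epimorphisms, so the pullback maps $P\to X$ and $P\to Y$ are also epimorphisms, with kernels $A$ and $Y'$ respectively.

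It remains to see that $P\in\Y$. Since $\Y=\X^\perp$, the class $\Y$ is closed under extensions: for any $X''\in\X$, the long exact $\Ext$ sequence (Yoneda $\Ext$, valid without enough projectives) gives
\[\Ext^1(X'',Y')\to\Ext^1(X'',P)\to\Ext^1(X'',Y).\]
Both outer terms vanish, so $\Ext^1(X'',P)=0$ and $P\in\Y$. Hence $0\to A\to P\to X\to 0$ is a special $\Y$-preenvelope of $A$, since its cokernel $X$ lies in $\X={}^\perp\Y$. This proves that $(\X,\Y)$ is right complete.

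For (2), fix $A$ and use that $\X$ is generating to choose an epimorphism $X\twoheadrightarrow A$ with $X\in\X$. Let $K$ be its kernel, and take a special $\Y$-preenvelope $0\to K\to Y\to X'\to 0$ of $K$. Form the pushout $Q$ of $K\to X$ along $K\to Y$. This yields exact sequences
\[0\to Y\to Q\to A\to 0 \quad\text{and}\quad 0\to X\to Q\to X'\to 0.\]
Since $\X$ is closed under extensions, $Q\in\X$, and therefore $Q\to A$ is a special $\X$-precover of $A$.

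The only point needing care is the diagram chase that identifies the kernels (respectively cokernels) in the pullback (respectively pushout). This is routine in any abelian category. No step relies on enough projectives or injectives, since Yoneda $\Ext^1$ suffices for the long exact sequence used in the closure-under-extensions argument.
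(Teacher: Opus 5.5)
Your proof is correct and matches the paper's argument: embed $A$ into an object of $\Y$, take a special $\X$-precover of the cokernel, form the pullback, and conclude via closure of $\Y=\X^\perp$ under extensions, with (2) handled dually by a pushout. Your explicit treatment of the dual case and of the extension-closure step via Yoneda $\Ext$ just fills in details the paper leaves implicit.
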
 
\proof We will only prove assertion $(1)$ since $(2)$ is the dual.\\
Let $M \in \A$ and consider the following short exact sequences $$0 \to M \to Y_0 \to C_0 \to 0, \text{ and }0 \to Y_1 \to X_1 \to C_0 \to 0.$$
 The first sequence exists such that $Y_0\in \Y$ since $\Y$ is cogenerating, and the second one exists such that $X_1\in \X$ and $Y_1 \in \Y$ since $(\X,\Y)$ is left complete. Now, we consider the following pullback diagam: 
$$
\begin{tikzcd}[column sep=2em, row sep=1em]
  & & 0 \arrow[d] & 0 \arrow[d] & \\[0.3ex]
  & & Y_1 \arrow[d] \arrow[r, equal] & Y_1 \arrow[d] & \\
  0 \arrow[r] & M \arrow[r] \arrow[d, equal] 
              & D \arrow[r, dashed] \arrow[d, dashed] 
              & X_1 \arrow[r] \arrow[d] 
              & 0 \\
  0 \arrow[r] & M \arrow[r] 
              & Y_0 \arrow[r] \arrow[d] 
              & C_0 \arrow[r] \arrow[d] 
              & 0 \\
  & & 0 & 0. &
\end{tikzcd}
$$
The middle column gives $D \in \Y$ since $Y_1 \in \Y$, $Y_0\in \Y$, and $\Y$ is closed under extensions. Thus, $(\X,\Y)$ is right complete.
\cqfd

\begin{lem} \label{lem-intersection-generating}
Let $(\X,\Y)$ be a cotorsion pair and $\W$ a class that is closed under extensions. 

\begin{enumerate}
 \item If $\W$ is cogenerating, $\X \subseteq \W$ and  $(\X,\Y)$ is right complete, then $\W \cap \Y$ is cogenerating.
    \item  If $\W$ is generating, $\Y \subseteq \W$ and  $(\X,\Y)$ is left complete, then $\W \cap \X$ is generating.
\end{enumerate}
    
\end{lem}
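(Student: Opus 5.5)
We prove assertion (1); assertion (2) is its exact dual, obtained by reversing arrows (replace cogenerating by generating, right completeness by left completeness, and pushouts by pullbacks).

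Let $M\in\A$. Since $\W$ is cogenerating, there is a short exact sequence $0\to M\to W\to C\to 0$ with $W\in\W$. Since $(\X,\Y)$ is right complete, $W$ has a special $\Y$-preenvelope, i.e. a short exact sequence $0\to W\to Y\to X\to 0$ with $Y\in\Y$ and $X\in\X$. Because $\X\subseteq\W$, both $W$ and $X$ lie in $\W$, and $\W$ is closed under extensions, so $Y\in\W$. Hence $Y\in\W\cap\Y$.

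The composite $M\to W\to Y$ is a monomorphism, being a composition of monomorphisms. So $M$ embeds in an object of $\W\cap\Y$, which shows that $\W\cap\Y$ is cogenerating.

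No pushout diagram is needed: the only nontrivial input is closure of $\W$ under extensions, applied to the special preenvelope sequence of $W$. Dually, for (2), take an epimorphism $W\to M$ with $W\in\W$ and a special $\X$-precover $0\to Y\to X\to W\to 0$. Then $X\in\W$ because $Y,W\in\W$ and $\W$ is closed under extensions, so $X\in\W\cap\X$, and the composite $X\to W\to M$ is an epimorphism.

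I do not expect any real obstacle here. The only points to check are that the preenvelope is taken of $W$ rather than of $M$, so that extension-closure of $\W$ applies, and that compositions of monomorphisms (respectively epimorphisms) remain monomorphisms (respectively epimorphisms).
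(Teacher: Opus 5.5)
Your proof is correct and is essentially the paper's argument: the paper uses the same two sequences and the same key step, namely that $Y\in\mathcal{W}$ because $X\in\mathcal{X}\subseteq\mathcal{W}$, $W\in\mathcal{W}$, and $\mathcal{W}$ is closed under extensions. Its pushout of $W\to C$ along $W\to Y$ only serves to display the monomorphism $M\to Y$, which is exactly your composite $M\to W\to Y$, so your observation that no pushout is needed is a valid streamlining.
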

\proof  We will only prove assertion $(1)$ since $(2)$ is the dual.\\
Let $M \in \A$ and consider the following short exact sequences $$0 \to M \to W \to C \to 0, \text{ and }0 \to W \to Y \to X\to 0.$$
 The first sequence exists such that $W\in \W$ since $\W$ is cogenerating and the second one exists such that $Y \in \Y$ and $X\in \X$ since $(\X,\Y)$ is right complete. Now, we consider the following pushout diagam: 
 \[
\begin{tikzcd}[column sep=2em, row sep=1em]
&&0\arrow[d]&0\arrow[d]&\\
0 \arrow[r] 
  & M \arrow[r] \arrow[d, equal] 
  & W \arrow[r] \arrow[d] 
  & C \arrow[r] \arrow[d, dashed] 
  & 0 \\
0 \arrow[r] 
  & M \arrow[r] 
  & Y \arrow[r, dashed] \arrow[d] 
  & E \arrow[r] \arrow[d] 
  & 0 \\
  & 
  & X \arrow[d] \arrow[r, equal]
  & X \arrow[d] 
  & \\
  & 
  & 0 
  & 0. 
  &
\end{tikzcd}
\]
Not that $X\in \W$ since $\X \subseteq \W$. Then, by the middle column $Y\in \W$ since $\W$ is closed under extensions, Then $Y\in \W\cap\Y$. Thus, $\W\cap\Y$ is cogenerating. \cqfd

\subsection{Hovey triples}\label{Subsec. Hov-triple}  Recall that a Quillen's model structure on a category $\A$ is a triple $({\rm Cof},{\rm Weak}, {\rm Fib})$ of classes of morphisms of $\A$ called cofibrations, weak equivalences, and fibrations, satisfying a set of axioms \cite[Definition 1.1.3.]{Hov99}. These axioms provide a general framework for studying homotopy theory. By this, we mean that we can associate to $\M$ the homotopy category $\Ho_\A(\M)$, which is defined by formally inverting the weak equivalences of $\M$. In other words, $\Ho_\A(\M):=\A[{\rm Weak}^{-1}]$ is obtained after localizing $\A$ at the class of weak equivalences {\rm Weak} (see \cite{Hov99} and  \cite{Sto14} for more details).

In abelian categories, Hovey defined in \cite{Hov02} an abelian model structure as a model structure in the sense of Quillen that satisfies the following two assertions.
\begin{enumerate}
    \item[(i)] A morphism is a cofibration if and only if it is a monomorphism with cofibrant cokernel. 
   \item[(ii)]  A morphism is a fibration if and only if it is an epimorphism with fibrant kernel. 
\end{enumerate}
Then, he showed in \cite[Theorem 2.2]{Hov02} that an abelian model structure on $\A$ is equivalent to a triple $(\Q,\W,\R)$ of classes of objects in $\A$ for which $\W$ is thick (that is, $\W$ is closed under direct summands and satisfies that whenever two out of three terms in a short exact sequence are in $\W$, then so is the third) and $(\Q,\W\cap\R)$ and $(\Q\cap\W,\R)$ are each complete cotorsion pairs. In this case, $\Q$ is precisely the class of cofibrant objects of the model structure, $\R$ is the class of fibrant objects, and $\W$ is the class of trivial objects. We hence denote an abelian model structure $\M$ as a triple $\M=(\Q,\W,\R)$, and for short, we will denote the two associated cotorsion pairs above by $(\Q,\R_\W)$ and $(\Q_\W,\R)$, and often refer to $\M$ as a Hovey triple. We say that $\M$ is hereditary if both the associated cotorsion pairs are hereditary.

\section{Main result} \label{Main result}

In this section, we prove Theorem A from the introduction. The proof is based on Proposition \ref{obj in Qn} and Proposition \ref{obj in Qn 2}; an adjusted version of the dual of \cite[Proposition 3.2]{Elm25} to our situation, in which we significantly improve the given assumptions.
\begin{prop}\label{obj in Qn} Let $n\geq 0$ and $(\Q,\W,\R)$ be a triple of classes of objects such that $(\Q,\R_\W)$ is a complete cotorsion pair. Consider the following assertions for any object $M \in \mathcal{A}$.
	\begin{enumerate}
		\item $M \in \Q_n$.
		\item There is a $\Hom_\A(-,\Q_{\W,n}^\perp)$-exact exact sequence $0\to K\to Q\to M\to 0$ with $Q\in  \Q$ and $K\in \Q_{\W,n-1}$ ($K \in \Q_\W$ if $n=0$).
        \item There is a short exact sequence $0\to M\to Q\to C\to 0$	with $Q\in\Q_{\W,n}$ and $C\in \Q$.
	\end{enumerate}
    Then, (1) $\Leftrightarrow$ (2) $\Rightarrow$ (3). If moreover, $(\Q,\R_\W)$ is hereditary, then (3) $\Rightarrow$ (1).
\end{prop}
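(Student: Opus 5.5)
The plan is to prove (1) $\Rightarrow$ (3) first, by induction on $n$, using only that $\Q={}^\perp\R_\W$ is closed under extensions and that $(\Q,\R_\W)$ is complete. Then I obtain (1) $\Rightarrow$ (2) from the same construction, and finish with the easy implications. I assume throughout that $0\in\W$, so $0\in\Q_\W$; this is needed in the base cases.

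\emph{(1) $\Rightarrow$ (3).} Let $n=0$ and $M\in\Q$. Take a special $\R_\W$-preenvelope $0\to M\to Y\to X\to 0$ with $Y\in\R_\W$ and $X\in\Q$. Then $Y\in\Q$ by closure under extensions, so $Y\in\Q_\W$ and we are done.

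For $n\ge1$, write $0\to M'\to Q_0\to M\to 0$ with $Q_0\in\Q$ and $M'\in\Q_{n-1}$. By induction there is $0\to M'\to T\to C'\to 0$ with $T\in\Q_{W,n-1}$ and $C'\in\Q$. Pushing out along $M'\to T$ gives two sequences:
\begin{itemize}
\item $0\to T\to P\to M\to 0$;
\item $0\to Q_0\to P\to C'\to 0$, which shows $P\in\Q$.
\end{itemize}
Next take a special preenvelope $0\to P\to Y\to X\to 0$ with $Y\in\R_\W$ and $X\in\Q$; as before $Y\in\Q_\W$. The composite $T\hookrightarrow P\hookrightarrow Y$ is a monomorphism, and the resulting sequence $0\to T\to Y\to Y/T\to 0$ shows $Y/T\in\Q_{W,n}$. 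Finally, $M=P/T\hookrightarrow Y/T$ has cokernel $Y/P\cong X\in\Q$ (snake/third isomorphism). This gives (3).

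\emph{(1) $\Rightarrow$ (2).} I use the sequence $0\to T\to P\to M\to 0$ built above, where $P\in\Q$ and $T\in\Q_{W,n-1}$. For $n=0$, take $0\to0\to M\to M\to0$ instead. Let $N\in\Q_{W,n}^\perp$. The sequence is $\Hom_\A(-,N)$-exact exactly when $\Hom(P,N)\to\Hom(T,N)$ is surjective. Since $Y/T\in\Q_{W,n}$, we have $\Ext^1(Y/T,N)=0$, so every map $T\to N$ extends to $Y$. Because $T\to Y$ factors through $P$, restricting that extension to $P$ gives the required preimage. I expect this Hom-exactness to be the main obstacle: the naive choice of $P$ does not obviously work, and the point is to pass through the auxiliary object $Y$.

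\emph{(2) $\Rightarrow$ (1) and (2) $\Rightarrow$ (3).} For (2) $\Rightarrow$ (1), note $K\in\Q_{W,n-1}\subseteq\Q_{n-1}$. Splicing a length-$(n-1)$ $\Q$-resolution of $K$ with $K\to Q\to M$ gives $\resdim_\Q M\le n$. The implication (2) $\Rightarrow$ (3) then follows through (1).

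\emph{(3) $\Rightarrow$ (1) under heredity.} Given $0\to M\to Q'\to C\to 0$ with $Q'\in\Q_{W,n}$ and $C\in\Q$, choose $0\to K'\to W_0\to Q'\to 0$ with $W_0\in\Q_\W$ and $K'\in\Q_{W,n-1}$. Pulling back along $M\to Q'$ gives two sequences:
\begin{itemize}
\item $0\to K'\to E\to M\to 0$;
\item $0\to E\to W_0\to C\to 0$.
\end{itemize}
By heredity, $\Q$ is closed under kernels of epimorphisms (\cite[Lemma 6.17]{Sto14}), so $E\in\Q$. Since $K'\in\Q_{n-1}$, it follows that $M\in\Q_n$.
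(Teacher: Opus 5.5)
\emph{Gap: $n=0$ in (2) $\Rightarrow$ (1).} Your proof of (2) $\Rightarrow$ (1) does not cover $n=0$. In that case (2) only says $K\in\Q_{\W}$, not $K=0$. So splicing $0\to K\to Q\to M\to 0$ gives $M\in\Q_1$, not $M\in\Q_0=\Q$. Your argument never uses the $\Hom_\A(-,\Q_{\W,n}^\perp)$-exactness, and for $n=0$ that hypothesis is indispensable. For example, in the category of abelian groups take $\Q$ to be the projective (free) groups and $\W=\R$ to be everything. Then $(\Q,\R_\W)$ is a complete cotorsion pair with $\Q_\W=\Q$. The sequence $0\to\mathbb{Z}\to\mathbb{Z}\to\mathbb{Z}/2\to 0$ (multiplication by $2$) has exactly the shape required in (2) for $n=0$, apart from Hom-exactness, yet $\mathbb{Z}/2\notin\Q$. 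Since you route (2) $\Rightarrow$ (3) through (1), that implication has the same hole at $n=0$.

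\emph{Fix and comparison with the paper.} The fix is the paper's argument for $n=0$. For $R\in\R_\W$ one has $R\in\Q^\perp\subseteq\Q_{\W}^\perp$, so $\Hom_\A(Q,R)\to\Hom_\A(K,R)$ is surjective. The long exact sequence then gives $\Ext^1_\A(M,R)\hookrightarrow\Ext^1_\A(Q,R)=0$, hence $M\in{}^\perp\R_\W=\Q$. Everything else in your proposal is correct and essentially the paper's proof. Your (1) $\Rightarrow$ (3) is the same pair of pushouts. The Hom-exactness in (1) $\Rightarrow$ (2) comes from the same detour through $Y\in\Q_\W$ with $Y/T\in\Q_{\W,n}$. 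Where you differ, you are slightly more economical. You reuse the sequence $0\to T\to P\to M\to 0$ built from an actual syzygy, whereas the paper starts again from a special $\Q$-precover $0\to K_0\to Q_0\to M\to 0$ and must know that $K_0\in\Q_{n-1}$. Your pullback proof of (3) $\Rightarrow$ (1) uses only that $\Q$ is closed under kernels of epimorphisms, instead of quoting that $\Q_n$ is. Finally, $0\in\W$ needs no separate assumption, since $0\in\Q^\perp=\R_\W\subseteq\W$.
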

\proof
(1) $\Rightarrow$ (3)  We proceed by induction on $n \ge 0$. For $n = 0$, we consider a short exact sequence $
\mathcal{E}: \quad 0 \longrightarrow M \longrightarrow R \longrightarrow C \longrightarrow 0,$
which exists, such that $R \in \R_\W$ and $C \in \Q,$ since $(\Q, \R_\W)$ is complete. We have $M \in \Q$ by assumption,  then  $R \in \Q$ and hence $R \in \Q \cap \W = \Q_{\W}$. Thus, $\mathcal{E}$ is the desired exact sequence.  

Now, assume that (1) $\Rightarrow$ (3) holds for $n-1$ and let us prove that it holds also for $n$. Let $M \in \Q_n$ and consider the following exact sequence $0 \longrightarrow Q_n \longrightarrow \cdots \longrightarrow Q_0 \longrightarrow M \longrightarrow 0$ with $Q_i \in \Q$ for every $i \in \{0, \ldots, n\}$.  Clearly, $Z_0 := \mathrm{\Ker}(Q_0 \to M) \in \Q_{n-1}$.  Then, by hypothesis of induction, there exists an exact sequence $0 \longrightarrow Z_0 \longrightarrow W_0 \longrightarrow C_0 \longrightarrow 0,$
where $W_0 \in \Q_{\W,n-1}$ and $C_0 \in \Q$.  
Consider the following pushout diagram: 
$$
\begin{tikzcd}[column sep=2em, row sep=1em]
&0\arrow[d]&0\arrow[d]&&\\
0 \arrow[r]  
  & Z_0 \arrow[r] \arrow[d] 
  & Q_0 \arrow[r] \arrow[d, dashed] 
  & M \arrow[r] \arrow[d, equal] 
  & 0 \\
0 \arrow[r]  
  & W_0 \arrow[r, dashed] \arrow[d] 
  & E_0 \arrow[r] \arrow[d] 
  & M \arrow[r]  
  & 0 \\ 
  & C_0 \arrow[r,  equal] \arrow[d]
  & C_0 \arrow[d]  &&\\
  & 0  
  & 0.
  &
  &
\end{tikzcd}
$$By the middle column, $E_0 \in \Q$.  Then, by the case $n=0$, we get the following short exact sequence
$0 \longrightarrow E_0 \longrightarrow W_1 \longrightarrow C_1\longrightarrow 0$
with $W_1 \in \Q_{\W}$ and $C_1 \in \Q$.
Consider now another pushout diagram
$$
\begin{tikzcd}[column sep=2em, row sep=1em]
&&0\arrow[d]&0\arrow[d]&\\
0 \arrow[r] 
  & W_0 \arrow[r] \arrow[d, equal] 
  & E_0 \arrow[r] \arrow[d] 
  & M \arrow[r] \arrow[d, dashed] 
  & 0 \\
0 \arrow[r] 
  & W_0 \arrow[r] 
  & W_1 \arrow[r, dashed] \arrow[d] 
  & E_1 \arrow[r] \arrow[d] 
  & 0 \\
  & 
  & C_1 \arrow[d] \arrow[r, equal]
  & C_1 \arrow[d] 
  & \\
  & 
  & 0 
  & 0.
  &
\end{tikzcd}
$$ Note that $E_1 \in \Q_{\W,n}$ by the middle row since $W_1 \in \Q_{\W}$ and $W_0 \in \Q_{\W,n-1}$.
Then,  $0 \longrightarrow M \longrightarrow E_1 \longrightarrow C_1 \longrightarrow 0$ is the desired short exact sequence.

(1) $\Rightarrow$ (2) The case $n=0$:  Since $(\Q,\R_{\W})$ is complete, there exists a short exact sequence: 
$\mathcal{E}: \quad 0 \longrightarrow K \longrightarrow Q \longrightarrow M \longrightarrow 0$ with $Q \in \Q$ and $K \in \R_{\W}$. The sequence $\mathcal{E}$ splits since $M\in \Q$ and $(\Q,\R_{\W})$ is a cotorsion pair; thus, it is $\mathrm{Hom}_{\mathcal{A}}(-,\Q_{\W}^\perp)$-exact. Note that  $K \in \Q$ since $\mathcal{E}$ splits and $Q \in \Q.$  Thus, $K \in \Q \cap \W = \Q_{\W}$. Therefore, $\mathcal{E}$ is the desired sequence.

The case $n \ge 1$: Let $M \in \Q_n$  and consider the following short exact sequence $0 \longrightarrow K_0\longrightarrow Q_0 \longrightarrow M \longrightarrow 0$
which exists, with $Q_0 \in \Q$ and $K_0 \in \R_\W$, since $(\Q,\R_{\W})$ is complete. Note that $K_0 \in \Q_{n-1}$. Then, by (1) $\Rightarrow$(3) there exists a short exact sequence $ 0 \longrightarrow K_0 \longrightarrow Q_1 \longrightarrow C_1\longrightarrow 0$
where $Q_1 \in \Q_{\W,n-1}$ and $C_1 \in \Q$. Now, consider the following pushout diagram:

$$
\begin{tikzcd}[column sep=2em, row sep=1em]
&0\arrow[d]
&0\arrow[d]
&
&\\
0 \arrow[r]  
& K_0 \arrow[r] \arrow[d] 
& Q_0 \arrow[r] \arrow[d, dashed] 
& M \arrow[r] \arrow[d, equal] 
& 0 \\
0 \arrow[r]  
& Q_1 \arrow[r, dashed] \arrow[d] 
& E_0 \arrow[r] \arrow[d] 
& M \arrow[r]  
& 0 
\\ 
& C_1 \arrow[r,  equal] \arrow[d]
& C_1 \arrow[d]  
&
&
\\
& 0  
& 0.
&
&
\end{tikzcd}
$$
From the middle column, $E_0 \in \Q$. Now, let us prove that the  short exact sequence $0\to Q_1\to E_0\to M\to 0$ is $\Hom_\A(-,\Q_{\W,n}^\perp)$-exact. Since $(\Q,\R_{\W})$ is  complete, there is an exact sequence $
0 \longrightarrow E_0 \longrightarrow R \longrightarrow C_2\longrightarrow 0$ with $R\in \R_\W$ and $C_2 \in \Q$.  
Consider another pushout:

$$
\begin{tikzcd}[column sep=2em, row sep=1em]
&&0\arrow[d]&0\arrow[d]&\\
0 \arrow[r] 
  & Q_1 \arrow[r,,"h"]\arrow[d, equal] 
  & E_0 \arrow[r] \arrow[d,,"g"] 
  & M \arrow[r] \arrow[d, dashed] 
  & 0 \\
0 \arrow[r] 
  & Q_1 \arrow[r,"f"]
  & R \arrow[r, dashed] \arrow[d] 
  & E_1 \arrow[r] \arrow[d] 
  & 0 \\
  & 
  & C_2 \arrow[d] \arrow[r, equal]
  & C_2 \arrow[d] 
  & \\
  & 
  & 0 
  & 0.
  &
\end{tikzcd}
$$
By the middle column, $R\in \Q$, then $R\in\Q_{\W}=\Q \cap \W$ since $R\in\R_{\W} \subseteq \W$. Then, if we consider the middle row we get $E_1\in\Q_{\W,n}$ since $Q_1\in\Q_{\W,n-1}$.

Now, let $Y\in{}\Q_{\W,n}^{\perp}$. Then, $\operatorname{Hom}_{\mathcal{A}}(f,Y):\operatorname{Hom}_{\mathcal{A}}(R,Y) \to \operatorname{Hom}_{\mathcal{A}}(Q_1,Y)$ is epic by the middle row since $\operatorname{Ext}^{1}_{\mathcal{A}}(E_1,Y)=0.$
Then, $\operatorname{Hom}_{\mathcal{A}}(h,Y):\operatorname{Hom}_{\mathcal{A}}(E_0,Y)\to \operatorname{Hom}_{\mathcal{A}}(Q_1,Y)$ is also epic since $\operatorname{Hom}_{\mathcal{A}}(f,Y)=\operatorname{Hom}_{\mathcal{A}}(h,Y) \operatorname{Hom}_{\mathcal{A}}(h,Y)$. That is, the sequence $0\to Q_1\to E_0\to M\to 0$ is $\operatorname{Hom}_{\mathcal{A}}(-,Y)$-exact and hence it is the desired short exact sequence.
 
(2) $\Rightarrow$ (1) If $n = 0$, let $R  \in \R_W$ and $ \mathcal{E}: 0\to K\to Q\to M\to 0$ be a $\Hom_\A(-,\Q_{\W}^\perp)$-exact short exact sequence  with $Q\in  \Q$ and $K\in \Q_{\W}$. Since $\R_W={\Q}^\perp \subseteq{\Q_\W}^\perp$, $\mathcal{E}$ is $\Hom_\A(-,R)$-exact. That is, $0 \to \Ext_\A(M,R) \to \Ext_\A(Q,R) \to \cdots$ is exact. But, $\Ext_\A(Q,R)=0$, then $\Ext_\A(M,R)=0$. Thus, $M \in {}^\perp\R_\W=\Q$.

If $n \ge 1$, let  $0\to K\to Q\to M\to 0$ be a short exact sequence such that $Q\in  \Q$ and $K\in \Q_{\W,n-1}$. But, $\Q_\W \subseteq \Q$, then $K\in \Q_{n-1}$ which implies that $M\in \Q_n.$

(3) $\Rightarrow$ (1)  Let $0\to M\to Q\to C\to 0$ be a short exact sequence such that $Q\in\Q_{\W,n}$ and $C\in \Q$. Then, $Q\in\Q_{n}$ since $\Q_{\W} \subseteq \Q$. Thus, $M \in \Q_n$ by  \cite[Proposition 5.2(1)]{BEGO25} since $(\Q,\R_\W)$ is a complete hereditary cotorsion pair.
\cqfd

\begin{cor}\label{cor obj in Qn} Let $n\geq 0$ and $(\Q,\W,\R)$ be a triple of classes of objects such that $(\Q,\R_\W)$ is a complete cotorsion pair and $\W$ is coresolving. Then, $\Q_{\W,n}= \W\cap\Q_n.$ Consequently, for all $M\in \W$, $\resdim_{\Q_{\W}} M=\resdim_{\Q} M$.
\end{cor}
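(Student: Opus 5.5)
We prove the two inclusions $\Q_{\W,n}\subseteq \W\cap\Q_n$ and $\W\cap\Q_n\subseteq\Q_{\W,n}$ separately, and then read off the equality of dimensions.

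\emph{The inclusion $\Q_{\W,n}\subseteq \W\cap\Q_n$.} Let $M\in\Q_{\W,n}$, with a resolution $0\to Q_n\to\cdots\to Q_0\to M\to 0$ in which every $Q_i\in\Q_\W$. Since $\Q_\W\subseteq\Q$, this same resolution gives $M\in\Q_n$. It remains to see that $M\in\W$, and here we use that $\W$ is coresolving, so in particular closed under cokernels of monomorphisms. Split the resolution into short exact sequences and work from the left. The first one is $0\to Q_n\to Q_{n-1}\to Z\to 0$, so $Z\in\W$. Continuing through the remaining short exact sequences in the same way shows that every cokernel lies in $\W$, and in particular $M\in\W$.

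\emph{The inclusion $\W\cap\Q_n\subseteq\Q_{\W,n}$.} Let $M\in\W\cap\Q_n$. By Proposition \ref{obj in Qn}, the implication (1) $\Rightarrow$ (2) gives a short exact sequence $0\to K\to Q\to M\to 0$ with $Q\in\Q$ and $K\in\Q_{\W,n-1}$ (with $K\in\Q_\W$ when $n=0$). By the first inclusion, applied with $n-1$ in place of $n$, we have $K\in\W$. Since $\W$ is closed under extensions and both $K,M\in\W$, we get $Q\in\W$, hence $Q\in\Q\cap\W=\Q_\W$. Splicing this sequence with a $\Q_\W$-resolution of $K$ of length at most $n-1$ gives a $\Q_\W$-resolution of $M$ of length at most $n$, so $M\in\Q_{\W,n}$. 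The case $n=0$ is immediate, because there $K\in\Q_\W$ already.

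\emph{The consequence on dimensions.} Let $M\in\W$. Since $\Q_\W\subseteq\Q$, we always have $\resdim_{\Q}M\le\resdim_{\Q_\W}M$. For the reverse inequality, suppose $\resdim_\Q M=n<\infty$. Then $M\in\W\cap\Q_n=\Q_{\W,n}$, so $\resdim_{\Q_\W}M\le n$. If instead $\resdim_\Q M=\infty$, then both dimensions are infinite. Hence the two dimensions agree.

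The main obstacle is getting $Q\in\W$ in the second inclusion. This requires knowing that the kernel $K$ lies in $\W$, which is exactly why the first inclusion is proved beforehand and applied at level $n-1$. The other ingredient is that a coresolving class contains cokernels of monomorphisms between its objects, which is what makes the first inclusion work.
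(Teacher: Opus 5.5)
Your proof is correct and is essentially the paper's argument. The first inclusion uses $\W_n\subseteq\W$, which follows from closure of $\W$ under cokernels of monomorphisms. The second uses Proposition~\ref{obj in Qn} (1)$\Rightarrow$(2), then extension-closure of $\W$ to get $Q\in\Q_\W$.

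One small correction concerns $n=0$: your stated reason does not work, since splicing $0\to K\to Q\to M\to 0$ with $K,Q\in\Q_\W$ only gives $M\in\Q_{\W,1}$. The case is trivial anyway, because $\W\cap\Q_0=\W\cap\Q=\Q_\W=\Q_{\W,0}$ by definition.
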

\proof
We have $\Q_{\W,n} \subseteq \W_n\cap\Q_n$ since $\Q_\W \subseteq \Q$ and $\Q_\W \subseteq \W$. But, $\W_n \subseteq \W$ since $\W$ is closed under cokernels of monomorphisms. Thus, $\Q_{\W,n} \subseteq \W\cap\Q_n.$

Conversely, let $M\in\W\cap\Q_n$ and consider a short exact sequence  $0\to K\to Q\to M\to 0$ which exists, such that $Q \in \Q$ and $K \in \Q_{\W,n-1}$, by Proposition \ref{obj in Qn}. Since $\W$ is closed under cokernels of monomorphisms $K \in \W$. Then, $Q \in \W$ since $M \in\W$ and $\W$ is closed under extensions. Thus, $Q \in \Q_\W$. Therefore, $M\in \Q_{\W,n}$.
\cqfd
\begin{prop}\label{obj in Qn 2} Let $n\geq 0$ and $(\Q,\W,\R)$ be a triple of classes of objects such that $(\Q,\R_\W)$ is a complete hereditary cotorsion pair, $\Q_\W^\perp\subseteq\R$, $(\Q_{\W,n},\Q_{\W,n}^\perp)$ is a left complete cotorsion pair and $\W$ is closed under kernels of epimorphisms. Then, the following assertions are equivalent for any object $M \in \A$:

\begin{enumerate}
		\item[$(1)$] $M \in \Q_n$.
		\item[$(2)$] $\Ext^1_\A(M,C)=0$ for all $C\in \W\cap \Q_{\W,n}^\perp$.
	\end{enumerate}

    In this case, $( \Q_n,\W\cap \Q_{\W,n}^\perp)$ is a hereditary cotorsion pair.
\end{prop}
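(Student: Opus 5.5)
The plan is to prove the two implications separately, and then derive the statement about the cotorsion pair from the equivalence. Throughout I will use the following standard consequence of $(\Q,\R_\W)$ being complete and hereditary. Take an iterated special $\Q$-precover of $M$,
$$0\to K_{n-1}\to Q_{n-1}\to\cdots\to Q_0\to M\to 0,$$
with every $Q_i\in\Q$ and every syzygy in $\R_\W$ except possibly the last. Dimension shifting, using $\Ext^{i}_\A(\Q,\R_\W)=0$ for all $i\ge1$, gives
$$M\in\Q_n \iff \Ext^{n+1}_\A(M,Y)=0 \text{ for all } Y\in\R_\W .$$
Indeed, $\Ext^{n+1}_\A(M,Y)\cong\Ext^1_\A(K_{n-1},Y)$, so the vanishing says $K_{n-1}\in{}^\perp\R_\W=\Q$. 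For the converse direction one uses \cite[Proposition 5.2(1)]{BEGO25}, which lets one replace the given finite $\Q$-resolution by the chosen syzygies. From this criterion, $\Q_n$ is closed under extensions, direct summands and kernels of epimorphisms, and it contains $\Q$ and $\Q_{\W,n}$. It also follows that $\Ext^1_\A(X,R')=0$ for every $X\in\Q_n$ and every $R'\in\R_\W$.

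\emph{(1) $\Rightarrow$ (2).} Let $M\in\Q_n$ and $C\in\W\cap\Q_{\W,n}^\perp$. By Proposition \ref{obj in Qn}, (1) $\Rightarrow$ (2), there is a $\Hom_\A(-,\Q_{\W,n}^\perp)$-exact sequence $0\to K\to Q\to M\to 0$ with $Q\in\Q$. Since $\Hom_\A(Q,C)\to\Hom_\A(K,C)$ is surjective, the long exact sequence shows that $\Ext^1_\A(M,C)\to\Ext^1_\A(Q,C)$ is injective. Because $\Q_\W\subseteq\Q_{\W,n}$, we get $C\in\Q_{\W,n}^\perp\subseteq\Q_\W^\perp\subseteq\R$, so $C\in\W\cap\R=\R_\W=\Q^\perp$. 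Hence $\Ext^1_\A(Q,C)=0$, and therefore $\Ext^1_\A(M,C)=0$.

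\emph{(2) $\Rightarrow$ (1).} I expect this to be the main obstacle: a special $\Q_{\W,n}$-precover has kernel in $\Q_{\W,n}^\perp$, but that kernel need not lie in $\W$. I would repair this with a pushout.

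1. Left completeness gives $0\to Y\to P\to M\to 0$ with $P\in\Q_{\W,n}$ and $Y\in\Q_{\W,n}^\perp$.
2. Completeness of $(\Q,\R_\W)$ gives $0\to Y\to R'\to C\to 0$ with $R'\in\R_\W$ and $C\in\Q$.
3. Pushing out along $Y\to R'$ produces exact sequences $0\to R'\to E\to M\to 0$ and $0\to P\to E\to C\to 0$.
4. From the second sequence, $E\in\Q_n$, since $P\in\Q_{\W,n}\subseteq\Q_n$, $C\in\Q$, and $\Q_n$ is closed under extensions.
5. The object $R'$ lies in $\W$, and it lies in $\Q_{\W,n}^\perp$ because $\Q_{\W,n}\subseteq\Q_n$ and $\Ext^1_\A(\Q_n,\R_\W)=0$ by the preliminary observation. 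So $R'\in\W\cap\Q_{\W,n}^\perp$, and hypothesis (2) gives $\Ext^1_\A(M,R')=0$.
6. Hence $0\to R'\to E\to M\to 0$ splits, so $M$ is a direct summand of $E\in\Q_n$, and $M\in\Q_n$.

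\emph{The cotorsion pair.} The equivalence just proved says exactly that $\Q_n={}^\perp(\W\cap\Q_{\W,n}^\perp)$. Implication (1) $\Rightarrow$ (2) gives $\W\cap\Q_{\W,n}^\perp\subseteq\Q_n^\perp$. For the reverse inclusion, take $Y\in\Q_n^\perp$. Since $\Q\subseteq\Q_n$, we get $Y\in\Q^\perp=\R_\W\subseteq\W$. Since $\Q_{\W,n}\subseteq\Q_n$, we get $Y\in\Q_{\W,n}^\perp$. So $\Q_n^\perp=\W\cap\Q_{\W,n}^\perp$, and $(\Q_n,\W\cap\Q_{\W,n}^\perp)$ is a cotorsion pair.

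For heredity, $\Q_n$ is closed under kernels of epimorphisms and is generating, because it contains $\Q$ and $\Q$ is generating by completeness of $(\Q,\R_\W)$. For $X\in\Q_n$, take an epimorphism $Q\to X$ with $Q\in\Q$; its kernel $X'$ again lies in $\Q_n$. Dimension shifting then gives $\Ext^{i+1}_\A(X,C)\cong\Ext^i_\A(X',C)$ for $i\ge1$ and $C\in\W\cap\Q_{\W,n}^\perp$. This uses $\Ext^{\ge1}_\A(Q,C)=0$, which holds because $C\in\R_\W$ and $(\Q,\R_\W)$ is hereditary. Induction on $i$ then yields $\Ext^{i}_\A(\Q_n,\W\cap\Q_{\W,n}^\perp)=0$ for all $i\ge1$.
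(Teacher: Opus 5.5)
The gap is in step 5 of (2) $\Rightarrow$ (1). Your arguments for (1) $\Rightarrow$ (2), for $\Q_n^\perp=\W\cap\Q_{\W,n}^\perp$, and for heredity by dimension shifting are fine and agree with the paper.

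\textbf{Why step 5 fails.} It rests on your preliminary claim that $\Ext^1_\A(X,R')=0$ for all $X\in\Q_n$ and $R'\in\R_\W$. That claim is false. Since $(\Q,\R_\W)$ is a cotorsion pair, ${}^\perp\R_\W=\Q$, so the claim would force $\Q_n=\Q$. Your criterion only gives $\Ext^{i}_\A(X,\R_\W)=0$ for $i\geq n+1$.

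What step 5 actually needs is $\R_\W\subseteq\Q_{\W,n}^\perp$, i.e.\ $\Q_{\W,n}\subseteq\Q$. This fails in general. For example, in the standard projective model structure on complexes of modules, a disk complex on a module of projective dimension $1$ lies in $\Q_{\W,1}$ but is not DG-projective.

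So the object $R'$ you get from a special $\R_\W$-preenvelope of $Y$ lies in $\W$, but nothing places it in $\Q_{\W,n}^\perp$. Hypothesis (2) therefore cannot be used to split $0\to R'\to E\to M\to 0$. Notice also that your argument never uses that $\W$ is closed under kernels of epimorphisms; that unused hypothesis is where the missing idea lies.

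\textbf{The fix.} Perform the two approximations in the opposite order, as the paper does.
\begin{enumerate}
\item Take $0\to M\to R\to C\to 0$ with $R\in\R_\W$ and $C\in\Q$.
\item Take a special $\Q_{\W,n}$-precover $0\to K\to Q\to R\to 0$ of $R$. Then $K\in\Q_{\W,n}^\perp$ by construction. Also $K\in\W$, because $R,Q\in\W$ and $\W$ is closed under kernels of epimorphisms. This uses $\Q_{\W,n}\subseteq\W$, which holds for instance when $\W$ is thick, as in Theorem A.
\item Pull back along $M\to R$. This gives $0\to K\to D\to M\to 0$, which splits by (2), and $0\to D\to Q\to C\to 0$.
\item The second sequence shows $D\in\Q_n$, since $\Q_n$ is closed under kernels of epimorphisms. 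Hence $M$, a direct summand of $D$, lies in $\Q_n$.
\end{enumerate}
The point is to replace $M$ by an object of $\W$ before taking the $\Q_{\W,n}$-precover, so that the kernel lands in $\W\cap\Q_{\W,n}^\perp$.
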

\proof 
$(1) \Rightarrow (2)$ Let $M \in \Q_n$ and $C\in \W\cap\Q_{\W,n}^{\perp}$. By Proposition \ref{obj in Qn} there exists a $\operatorname{Hom}_{\mathcal{A}}({}{-},\Q_{\W,n}^{\perp})$-exact exact sequence $0\to K\stackrel{\varphi}{\longrightarrow} Q\to M\to 0$ with $Q\in\Q$ and $K \in \Q_{\W,n-1}$. Then,  $\operatorname{Hom}_{\mathcal{A}}(\varphi,C)$ is epic, which gives rise to the following exact sequence of abelian groups $
0\to \operatorname{Ext}^{1}_{\mathcal{A}}(M,C)\to\operatorname{Ext}^{1}_{\mathcal{A}}(Q,C)\to \cdots \textit{\, .}$
We have $\operatorname{Ext}^{1}_{\mathcal{A}}(Q,C)=0$ since \(Q\in\Q\),
and $C\in\Q_{\W,n}^{\perp}\cap\W \subseteq \Q_{\W}^{\perp}\cap\W \subseteq {\R}\cap{\W}={\R}_{\W}={\Q^\perp}.$
Thus, we get \(\operatorname{Ext}^{1}_{\mathcal{A}}(M,C)=0\).

 $(2) \Rightarrow (1)$ Since the cotorsion pair $(\Q, \R_\W)$ is complete, there is a short exact sequence $0 \to M \to R \to C \to 0$ with $R \in \R_\W$ and $C \in \Q$.  Let $ 0 \to K \to Q \to R \to 0 $ be a short exact sequence which exists such that $ Q \in \Q_{\W,n}$ and $K \in \Q_{\W,n}^\perp$ since $(\Q_{\W,n},\Q_{\W,n}^\perp)$ is left complete. Now, we consider the following pullback diagam:
$$
\begin{tikzcd}[column sep=2em, row sep=1em] 
&0\arrow[d]&0\arrow[d]&&\\
 & K \arrow[r,  equal] \arrow[d]
  & K \arrow[d]  &&\\
0 \arrow[r]  
  & D \arrow[r, dashed] \arrow[d, dashed] 
  & Q \arrow[r] \arrow[d] 
  & C \arrow[r] \arrow[d, equal] 
  & 0 \\
0 \arrow[r]  
  & M \arrow[r] \arrow[d] 
  & R\arrow[r] \arrow[d] 
  & C\arrow[r]  
  & 0 \\ 
  & 0  
  & 0.&&
\end{tikzcd}
$$
Since $R,Q \in \W$ and $\W$ is closed under kernels of epimorphisms, $K \in \W$, that is, $K \in \W \cap \Q_{\W,n}^\perp$. Then, the first column splits since $\Ext^1_\A(M,K)=0$ by assumption. Thus, $M$ is a direct summand of $D$. But, $D\in \Q_n$ by the middle row since $C \in \Q$ and $Q\in \Q_{\W,n} \subseteq \Q_n$ (see \cite[Proposition 5.2]{BEGO25}). Thus, $M\in \Q_n$.\\

 Now, let us prove that $( \Q_n,\W\cap \Q_{\W,n}^\perp)$ is a hereditary cotorsion pair. To do so, it suffices to prove  $\Q_n^\perp = \W \cap \Q_{\W,n}^\perp$ since $\Q_n={}^\perp (\W \cap \Q_{\W,n}^\perp)$ by $(1) \Leftrightarrow (2)$ and $Q_n$ is closed under kernels of epimorphisms by \cite[Proposition 5.2(1)]{BEGO25}). We have $\Q\subseteq \Q_n$ and $\Q_{\W,n}\subseteq \Q_n$, hence  $\Q \cup \Q_{\W,n}\subseteq \Q_n.$ Then, $\Q_n^\perp \subseteq (\Q \cup \Q_{\W,n})^\perp=\Q^\perp \cap \Q_{\W,n}^\perp=\R_\W \cap \Q_{\W,n}^\perp.$ Thus, $\Q_n^\perp \subseteq \W \cap \Q_{\W,n}^\perp$ since $\R_W=R\cap \W \subseteq \W.$
 On the other hand, we have already shown that $\Q_n={}^\perp(\W\cap\Q_{W,n}^\perp)$. Then, $\W\cap\Q_{W,n}^\perp\subseteq({}^\perp(\W\cap\Q_{W,n}^\perp))^\perp= \Q_n^\perp$. Therefore, $\Q_n^\perp = \W \cap \Q_{\W,n}^\perp$. \cqfd 

\begin{rem}
  Each result in this section has a dual version with a completely dual proof. We leave the details to the reader.  
\end{rem}

Now, we are in a position to prove Theorem A from Introduction.\\
\\
\textbf{Proof of Theorem A.} We only prove assertion (a) as assertion (b) is dual.

We have $\W$ is thick since $\M=(\Q,\W,\R)$ is a Hovey triple, $(\Q_n, \W \cap \Q_{\W,n}^\perp)$ is a cotorsion pair by Proposition \ref{obj in Qn 2} and $(\Q_n \cap\W, \Q_{\W,n}^\perp) = (\Q_{\W,n} , \Q_{\W,n}^\perp)$ is a complete cotorsion pair by assumption ($\Q_n \cap\W=\Q_{\W,n}$ by Corollary \ref{cor obj in Qn}).

 Now, let us prove that $(\Q_n, \W \cap \Q_{\W,n}^\perp)$ is complete. To this end, let $M \in \A$ and consider the following short exact sequences $0 \to K_0 \to Q_0 \to M \to 0$ and $0 \to K_0 \to Q_1 \to C_1 \to 0.$
 The first sequence exists such that $Q_0\in \Q$ and $K_0 \in \R_W$ since $(\Q,\R_\W)$ is complete and the second one exists such that $Q_1\in \Q_{\W,n}^\perp$ and $C_1 \in \Q_{\W,n}$ since $(\Q_{\W,n},\Q_{\W,n}^\perp)$ is complete. Now, we consider the following pushout diagam:
\[
\begin{tikzcd}[column sep=2em, row sep=1em] 
&0\arrow[d]&0\arrow[d]&&\\
0 \arrow[r]  
  & K_0 \arrow[r] \arrow[d] 
  & Q_0 \arrow[r] \arrow[d] 
  & M \arrow[r] \arrow[d, equal] 
  & 0 \\
0 \arrow[r]  
  & Q_1 \arrow[r] \arrow[d] 
  & E_1 \arrow[r] \arrow[d] 
  & M \arrow[r]  
  & 0 \\ 
  & C_1 \arrow[r,  equal] \arrow[d]
  & C_1 \arrow[d]  &&\\
  & 0  
  & 0.&&
\end{tikzcd}
\]
The middle column gives $E_1 \in \Q_n$ by \cite[Proposition 5.2(1)]{BEGO25} since $Q_0\in \Q$ and $C_1 \in \Q_{\W,n}\subseteq \Q_n$. The first column gives $Q_1 \in \W$ since $K_0\in \R_\W \subseteq \W$ and $C_1 \in \Q_{\W,n} \subseteq \W$ ($\W$ is closed under cokernels of monomorphisms). Thus, $Q_1 \in \W \cap \Q_{\W,n}^\perp.$  Therefore, $(\Q_n, \W\cap\Q_{\W,n}^\perp)$ is left complete. 

The right completeness of $(\Q_n, \W\cap\Q_{\W,n}^\perp)$ is a consequence of Lemma \ref{lem-left-right-complete} and Lemma \ref{lem-intersection-generating}. Indeed, $(\Q_{\W,n},\Q_{\W,n}^\perp)$ is right complete by assumption, $\W$ is cogenerating ( since $(\Q,\R_\W)$ is right complete and $\R_\W\subseteq \W$) and $\Q_{\W,n}\subseteq \W$ since $\Q_\W \subseteq \W$ and $\W$ is closed undeer cokernels of monomorphisms. Then, $\W\cap\Q_{\W,n}^\perp$ is cogenerating by Lemma \ref{lem-intersection-generating}. On the other hand, we have already proved that $(\Q_n, \W\cap\Q_{\W,n}^\perp)$ is left complete, then it is also right complete by Lemma \ref{lem-left-right-complete}.

 Now, let us prove that $\M_n=(\Q_n, \W, \Q_{\W,n}^\perp)$ is hereditary.  Since $M=(\Q,\W,\R)$ is a hereditary Hovey triple, $(\Q,\R_\W)$ and $(\Q_\W,\R)$ are complete hereditary cotorsion pairs. Then, by \cite[Proposition 5.2(1)]{BEGO25}, $\Q_n$ and $\Q_{\W,n}$ are closed under kernels of epimorphisms. Thus, $(\Q_n, \W\cap\Q_{\W,n}^\perp)$ and $(\Q_{\W,n}, \Q_{\W,n}^\perp)$ are hereditary. Therefore,  $\M_n=(\Q_n, \W, \Q_{\W,n}^\perp)$ is a hereditary Hovey triple.
\cqfd

\section{Application in the category of sheaves}

In this section, \(X\) denotes a scheme with structure sheaf \(\mathcal{O}_X\). By a sheaf on \(X\) we always mean a quasi-coherent sheaf of \(\mathcal{O}_X\)-modules. Our aim is to apply the results of the previous section to the Gorenstein flat model structure constructed in \cite{CET21} and the Gorenstein injective model structure constructed in \cite{EG25} on \(\Qcoh(X)\), the category of quasi-coherent sheaves on \(X\).

Throughout this section, we assume that \(X\) is semi-separated and quasi-compact. For most of the paper, we will strengthen this hypothesis by assuming further that \(X\) is Noetherian (i.e. locally Noetherian and quasi-compact).

Recall that a sheaf $F$ on $X$ is called flat if, for every open affine subset $U \subseteq X$, the $\mathcal{O}_X(U)$-module $F(U)$ is flat. Equivalently, $F$ is flat if and only if the tensor product functor $-\otimes F$ is exact. Here $-\otimes - \;:=\; -\otimes_{\mathcal{O}_X} -$ denotes the usual tensor product on $\mathcal{O}_X\text{-Mod}$, the category of all sheaves of $\mathcal{O}_X$-modules (see e.g.\ \cite[Section 7.4]{GW10}).

Denote by $\flat(X)$ the class of flat sheaves on $X$. Estrada and Enochs showed \cite[Corollary 4.2]{EE05} that $\flat(X)$ is a covering class. On the other hand, the category $\Qcoh(X)$ is a Grothendieck abelian category, hence it has a generator. By \cite[Lemma A.1]{EP15} it follows that $\Qcoh(X)$ admits a flat generator. Consequently, $\flat(X)$ is a  special precovering class, i.e., every sheaf has a special flat precover. Therefore, the pair $(\flat(X),\cot(X))$ is a complete cotorsion pair where $\cot(X) =\flat(X)^\perp$ denotes the class of cotorsion sheaves. Moreover, by \cite[Proposition 6.4]{Gil07} this cotorsion pair is cogenerated by a set.
 
In the following result, we extend this fact to \(\flat_n(X)\), the class of all sheaves with flat dimension at most an integer \(n\geq 0\).

\begin{prop}\label{n-flat-cot-pair} 
Assume that \(X\) is a quasi-compact and semi-separated scheme. Then the pair
\[
\bigl(\flat_n(X),\ \flat_n(X)^\perp\bigr)
\]
is a complete and hereditary cotorsion pair cogenerated by a set.
\end{prop}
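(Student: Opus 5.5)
We will obtain the cotorsion pair from a generating set, using Gillespie's machinery in $\Qcoh(X)$. The starting point is that $(\flat(X),\cot(X))$ is complete and cogenerated by a set $\mathcal{S}$, by \cite[Proposition 6.4]{Gil07}; we may also assume $\mathcal{S}$ contains a flat generator. Note first that $\flat_n(X)$, defined via flat resolutions of length $n$, agrees with the class of sheaves $F$ such that $\Ext^{i}(F,C)=0$ for all $i\ge n+1$ and all $C\in\cot(X)$. This rests on dimension shifting and on the heredity of $(\flat(X),\cot(X))$: flat sheaves are closed under kernels of epimorphisms, as one checks on affine opens. Equivalently, $F\in\flat_n(X)$ if and only if its $n$-th syzygy in a flat resolution (built from special flat precovers) is flat.

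\textbf{Step 1: a generating set.} We will construct a set $\mathcal{S}_n\subseteq\flat_n(X)$ that generates, together with a cardinal $\kappa$, such that every $F\in\flat_n(X)$ is $\mathcal{S}_n$-filtered; more precisely, a transfinite extension of objects of $\flat_n(X)$ of size $<\kappa$. For this we use the standard deconstructibility argument. Sheaves of flat dimension $\le n$ are detected locally: $F(U)$ has flat dimension $\le n$ over $\mathcal{O}_X(U)$ for each affine $U$ in a finite semi-separating cover, since $X$ is quasi-compact. For modules, the class of modules of flat dimension $\le n$ is deconstructible, by Hill lemma/purity arguments \cite[Section 8.1]{GT12}. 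Gluing along the finite affine cover is done via Estrada--Guil Asensio--Prest--Trlifaj style deconstructibility of locally defined classes in $\Qcoh(X)$, i.e. Kaplansky-type submodule choices made compatibly on the finitely many affine pieces. Alternatively, we could use that $\flat_n(X)$ is closed under pure subobjects and pure quotients and under direct limits; then \cite[Lemma A.1]{EP15}/Gillespie-type results show that any sheaf in $\flat_n(X)$ is filtered by $\kappa$-presentable ones in $\flat_n(X)$. This deconstructibility step is the main obstacle, and we would first try the purity route. Flat dimension is computed by $\mathrm{Tor}$, which commutes with direct limits and is local, so closure under direct limits and pure subobjects holds. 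Closure under pure quotients then follows from the long exact $\mathrm{Tor}$ sequence, since pure exact sequences stay exact after tensoring.

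\textbf{Step 2: completeness.} Once $\flat_n(X)=\mathrm{Filt}(\mathcal{S}_n)$ up to summands, we will set $\mathcal{Y}=\mathcal{S}_n^\perp$. Eklof's lemma gives $\mathrm{Filt}(\mathcal{S}_n)\subseteq{}^\perp\mathcal{Y}$. The small object argument in the Grothendieck category $\Qcoh(X)$ then shows that $({}^\perp\mathcal{Y},\mathcal{Y})$ is complete, with ${}^\perp\mathcal{Y}$ consisting of direct summands of $\mathcal{S}_n$-filtered objects; this uses that $\mathcal{S}_n$ contains a generator (cf. \cite{Sto14}). Since $\flat_n(X)$ is closed under extensions, transfinite extensions (direct limits) and summands, ${}^\perp\mathcal{Y}=\flat_n(X)$. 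Hence $\mathcal{Y}=\flat_n(X)^\perp$, and the pair is cogenerated by the set $\mathcal{S}_n$.

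\textbf{Step 3: heredity.} By \cite[Lemma 6.17]{Sto14} it suffices to show that $\flat_n(X)$ is closed under kernels of epimorphisms. Given $0\to A\to B\to C\to 0$ with $B,C\in\flat_n(X)$, restrict to affine opens, where the sequence stays exact. There the long exact $\mathrm{Tor}$ sequence gives $\fd A(U)\le n$. Hence $A\in\flat_n(X)$.
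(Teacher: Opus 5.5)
Your outline is correct. Its core step, local detection of $\flat_n(X)$ on affine opens followed by an Estrada--Guil Asensio--Prest--Trlifaj type gluing of the module-level classes, is the same mechanism the paper uses. What differs is how you organize around it.

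\textbf{The paper's route.} It proves the local characterization explicitly: take a flat resolution with cotorsion syzygies, evaluate it on affine opens, and apply the syzygy criterion of \cite[Proposition 5.1]{BEGO25}. It then feeds the module-level complete pairs cogenerated by sets $\mathcal{S}_U$ from \cite{DM07} into \cite[Corollary 3.15]{EAP12}. That corollary gives completeness and cogeneration by an explicit set in one step. Heredity comes abstractly from \cite[Proposition 5.2(1)]{BEGO25}.

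\textbf{Your route.} You separate deconstructibility from completeness and then run the general Grothendieck-category machinery: Eklof's lemma and the small object argument with a generator in $\mathcal{S}_n$. This is more modular, and it also describes $\flat_n(X)$ as the direct summands of $\mathcal{S}_n$-filtered sheaves. Your heredity check via the local $\mathrm{Tor}$ sequence is more elementary than the paper's.

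Three points need tightening.

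\begin{enumerate}
\item Local detection does not follow from quasi-compactness. It needs the paper's argument: sections over an affine open of a flat resolution in $\Qcoh(X)$ form a flat resolution of modules. Combine this with your syzygy criterion and the locality of flatness to get both directions.
\item For the purity route, \cite[Lemma A.1]{EP15} only provides a flat generator; it gives no filtrations. What you need is a purification lemma in the locally finitely presentable category $\Qcoh(X)$: every small subobject lies inside a small pure subobject. Iterating this, using that $\flat_n(X)$ is closed under pure subobjects and pure quotients, gives the filtration. You also need that categorically pure sequences restrict to pure sequences of modules on affine opens. This holds because such sequences are filtered colimits of split ones, and $\Gamma(U,-)$ commutes with filtered colimits for affine $U$.
\item ``Pure sequences stay exact after tensoring'' only kills the first connecting map of the $\mathrm{Tor}$ sequence. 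To bound the flat dimension of pure subobjects and pure quotients in every degree, the whole long $\mathrm{Tor}$ sequence must break into short exact pieces. This again follows from writing the pure sequence as a filtered colimit of split sequences, or from the splitting of the character-dual sequence.
\end{enumerate}
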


\proof We first show that for any integer \(n\geq 1\) and any sheaf \(M\) on \(X\), 
\[
M\in\flat_n(X)\quad\Longleftrightarrow\quad M(U)\in\flat_n\bigl(\mathcal{O}_X(U)\bigr) \text{ for every open affine }U\subseteq X .
\]
Where $\flat_n\bigl(\mathcal{O}_X(U)\bigr)$ denotes the class of all $\mathcal{O}_X(U)$-modules with flat dimension at most an integer \(n\geq 0\).

Let \(M\) be a sheaf. By \cite[Corollary 4.2]{EE05}, there exists a flat resolution
\[
\cdots \to F_{2}\to F_1 \to F_0 \to M \to 0,
\]
in which each syzygy \(K_{k}:=\Ker(F_{k}\to F_{k-1})\) is cotorsion. For every open affine subset \(U\subseteq X\) this induces a flat resolution of \(M(U)\):
\[
\cdots \to F_2(U)\to F_1(U)\to F_0(U)\to M(U)\to 0.
\]
Hence, keeping in mind \cite[Proposition 5.1]{BEGO25}), we have the equivalences
\begin{align*}
M\in\flat_n(X) 
   &\Longleftrightarrow K_{n-1}\in\flat(X)  
      \\
   &\Longleftrightarrow K_{n-1}(U)\in\flat\bigl(\mathcal{O}_X(U)\bigr) 
      \qquad\text{for every open affine }U\subseteq X\\
   &\Longleftrightarrow M(U)\in\flat_n\bigl(\mathcal{O}_X(U)\bigr) 
      \qquad\text{for every open affine }U\subseteq X .
\end{align*}

Now, let $\mathbf{Q}_X=(\textbf{V},\textbf{E})$ be the quiver defined with the set $\textbf{V}$ of vertices, all open affine subsets of $X$, and the set of edges $\textbf{E}$ consists of the reversed arrows $(V \to U)$ corresponding to the inclusions $U\subseteq V$, where $U$ and $V$ are open affine subsets of $X$. By \cite[Theorem 3.4(2)]{DM07}, for every open affine $U\subseteq X$, the pair $$(\flat_n(\O_X(U),\flat_n(\O_X(U)^\perp)$$
is a complete cotorsion pair cogenerated by a set $\mathcal{S}_U$. Applying \cite[Corollary 3.15]{EAP12}, we obtain that
$$(\flat_n(X),\flat_n(X)^\perp)$$
is a complete cotorsion pair cogenerated by a (representative) set of the class
$$\mathcal{L}:=\{ L\in \Qcoh(X)|\,\, L(U)\in \mathcal{S}_U \text{ for every open affine subset $U\subseteq X$ } \}.$$

Finally, since \((\flat(X),\flat(X)^\perp)\) is a complete hereditary cotorsion pair, it follows from \cite[Proposition 5.2(1)]{BEGO25} that \(\flat_n(X)\) is closed under kernels of epimorphisms; consequently, the cotorsion pair \((\flat_n(X),\flat_n(X)^\perp)\) is hereditary. \cqfd

Recall that a sheaf $M$ is called Gorenstein flat \cite[Definition 1.2]{CET21} if there exists an acyclic complex of flat sheaves $\textbf{F}$ with  $M \cong Z_0(\textbf{F})$  the zero cycle of $\textbf{F}$ such that the complex $E\otimes \textbf{F}$ is acyclic for every injective sheaf $E$.
The class of all Gorenstein flat sheaves is denoted $\GF(X)$. Over a semi-separated Noetherian scheme $X$, Christensen, Estrada, and Thompson proved \cite[Theorem 2.5]{CET21} that there exists on $\Qcoh(X)$ a unique hereditary abelian model structure given by
$$\mathcal{M}_{\GF}=(\GF(X),W_{\mathrm{flat}} ,\flat(X)^\perp),$$
called the \textbf{Gorenstein flat model structure}.  

Now, we use Theorem A to extend this model structure to the setting of sheaves with bounded Gorenstein flat dimension.

\begin{thm}[$n$-Gorenstein flat model structure over $X$] \label{n-GF-Hovey-triple} 
Assume that \(X\) is semi-separated Noetherian. Then there exists a hereditary abelian model structure
\[
\mathcal{M}_{\GF_n}= \bigl( \GF_n(X),\ \mathcal{W}_{\mathrm{flat}},\ \flat_n(X)^\perp \bigr)
\]
on \(\Qcoh(X)\). Consequently, the pair \((\GF_n(X),\GF_n(X)^\perp)\) is a complete hereditary cotorsion pair with
$
\GF_n(X)^\perp = \mathcal{W}_{\mathrm{flat}} \cap \flat_n(X)^\perp, $
and \(\GF_n(X)\) is a special precovering class.
\end{thm}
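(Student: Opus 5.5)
We apply Theorem A(1) to the Gorenstein flat model structure $\mathcal{M}_{\GF}=(\GF(X),\mathcal{W}_{\mathrm{flat}},\flat(X)^\perp)$ of \cite[Theorem 2.5]{CET21}. Set $\Q=\GF(X)$, $\W=\mathcal{W}_{\mathrm{flat}}$ and $\R=\flat(X)^\perp=\cot(X)$. This triple is a hereditary Hovey triple on $\Qcoh(X)$ when $X$ is semi-separated Noetherian. Theorem A(1) then gives the whole statement, namely the Hovey triple $(\Q_n,\W,\Q_{\W,n}^\perp)$, the equality $\Q_n^\perp=\W\cap\Q_{\W,n}^\perp$, and special precovering. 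It only requires that $(\Q_{\W,n},\Q_{\W,n}^\perp)$ be a complete cotorsion pair. So the proof reduces to identifying $\Q_{\W,n}$ and checking this one hypothesis.

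\textbf{Step 1: identify $\Q_\W$.} The pair $(\Q_\W,\R)=(\Q\cap\W,\cot(X))$ is a cotorsion pair because $\mathcal{M}_{\GF}$ is a Hovey triple. Hence
\[
\GF(X)\cap\mathcal{W}_{\mathrm{flat}}={}^\perp\cot(X)={}^\perp(\flat(X)^\perp)=\flat(X),
\]
where the last equality holds because $(\flat(X),\cot(X))$ is a cotorsion pair, as recalled before Proposition \ref{n-flat-cot-pair}. It follows that $\Q_{\W,n}=\flat_n(X)$, the class of sheaves with flat resolution dimension at most $n$. This agrees with flat dimension by the discussion in the proof of Proposition \ref{n-flat-cot-pair}.

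\textbf{Step 2: completeness.} By Proposition \ref{n-flat-cot-pair}, $(\flat_n(X),\flat_n(X)^\perp)$ is a complete hereditary cotorsion pair. Only quasi-compactness and semi-separatedness are needed there, and both hold since $X$ is Noetherian. This is exactly the hypothesis of Theorem A(1).

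\textbf{Step 3: conclude.} Theorem A(1) yields the hereditary Hovey triple $(\GF_n(X),\mathcal{W}_{\mathrm{flat}},\flat_n(X)^\perp)$. It also gives that $(\GF_n(X),\GF_n(X)^\perp)$ is a complete hereditary cotorsion pair with $\GF_n(X)^\perp=\mathcal{W}_{\mathrm{flat}}\cap\flat_n(X)^\perp$, and that $\GF_n(X)$ is special precovering. Here $\GF_n(X)$ means the sheaves with $\GF(X)$-resolution dimension at most $n$, so it matches $\Q_n$ by definition.

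The only point needing care is Step 1. Everything hinges on the equality $\GF(X)\cap\mathcal{W}_{\mathrm{flat}}=\flat(X)$, and I obtain it formally from the cotorsion-pair axioms of a Hovey triple rather than from any explicit description of $\mathcal{W}_{\mathrm{flat}}$.
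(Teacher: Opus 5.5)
Your proposal is correct and follows the paper's own proof: the paper also just applies Theorem A(1) to the Gorenstein flat model structure, with completeness of $(\flat_n(X),\flat_n(X)^\perp)$ supplied by Proposition \ref{n-flat-cot-pair}. The paper leaves implicit the identification $\GF(X)\cap\mathcal{W}_{\mathrm{flat}}={}^\perp(\flat(X)^\perp)=\flat(X)$, and hence $\Q_{\W,n}=\flat_n(X)$; you derive it explicitly and correctly from the cotorsion pair $(\Q_\W,\R)$ of the Hovey triple.
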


\proof Apply Theorem A and Proposition \ref{n-flat-cot-pair} to the Gorenstein flat model structure. \cqfd 

\begin{rem}
A similar argument to that of Proposition~\ref{n-flat-cot-pair} shows that the pair 
\((\GF_n(X),\GF_n(X)^\perp)\) is a hereditary complete cotorsion pair. 
On the other hand, Proposition~\ref{n-flat-cot-pair} already tells us that 
\((\flat_n(X),\flat_n(X)^\perp)\) is also a hereditary complete cotorsion pair. Moreover, these two cotorsion pairs have the same core; that is,
\[
\GF_n(X) \cap \GF_n(X)^\perp = \flat_n(X) \cap \flat_n(X)^\perp .
\]
This equality follows by an argument analogous to \cite[Lemma 2.3]{CET21}. Hence, by \cite[Theorem 1.1]{Gil15} there exists a unique thick class \(\mathcal{W}_{\mathrm{flat},n}\) such that
\[
\mathcal{M}_{\GF_n} = \bigl( \GF_n(X),\ \mathcal{W}_{\mathrm{flat},n},\ \flat_n(X)^\perp \bigr)
\]
is a hereditary Hovey triple on \(\Qcoh(X)\). 

So Theorem~\ref{n-GF-Hovey-triple} shows that 
\(\mathcal{W}_{\mathrm{flat},n}\) does not depend on the integer $n$ and  coincides with the class \(\mathcal{W}_{\mathrm{flat}}\) of trivial objects in the 
Gorenstein flat model structure. In terms of homotopy categories, we obtain an equivalence of 
triangulated categories
$$
\Ho(\mathcal{M}_{\GF_n}) \simeq \Ho(\mathcal{M}_{\GF}) \hspace{0.5cm}\forall n\geq 0. $$

\end{rem}

\bigskip

Besides Theorem~\ref{n-GF-Hovey-triple}, other results of interest follow from the previous section. First, we obtain a characterization of sheaves with bounded Gorenstein flat dimension.

\begin{prop} \label{n-GF sheaf}  
Assume that \(X\) is a semi-separated Noetherian scheme. The following assertions are equivalent for any sheaf \(M\):
\begin{enumerate}
    \item \(\Gfd_X(M)\leq n\).
    \item There exists a \(\Hom_X(-,\flat_n(X)^\perp)\)-exact exact sequence of sheaves
          \[ 0\to K\to P\to M\to 0 \]
          where \(P\) is Gorenstein flat and \(\fd_X(K)\leq n-1\) (if \(n=0\), \(K\) is flat).
    \item \(\Ext^1_X(M,C)=0\) for every \(C\in W_{\mathrm{flat}}\cap \flat_n(X)^\perp\).
    \item There exists a short exact sequence of sheaves
          \[ 0\to M\to E\to L\to 0 \]
          where \(\fd_X(E)\leq n\) and \(L\) is Gorenstein flat.
\end{enumerate}
Consequently, $\flat_n(X) = W_{\mathrm{flat}} \cap \GF_n(X). $
\end{prop}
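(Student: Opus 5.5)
The plan is to specialize the abstract results of Section~\ref{Main result} to the Gorenstein flat model structure $\mathcal{M}_{\GF}=(\GF(X),\mathcal{W}_{\mathrm{flat}},\flat(X)^\perp)$. We take $\Q=\GF(X)$, $\W=\mathcal{W}_{\mathrm{flat}}$ and $\R=\flat(X)^\perp=\cot(X)$. The first step is to identify the relevant classes. Since $\mathcal{M}_{\GF}$ is a Hovey triple, the trivially cofibrant class $\Q_\W=\GF(X)\cap\mathcal{W}_{\mathrm{flat}}$ is ${}^\perp\cot(X)=\flat(X)$; this is the left half of the complete cotorsion pair $(\Q_\W,\R)$ with $\R=\cot(X)$. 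Consequently $\Q_{\W,n}=\flat_n(X)$, i.e.\ the sheaves with $\fd_X\le n$, which is meaningful by \cite[Proposition 5.1]{BEGO25} and the proof of Proposition~\ref{n-flat-cot-pair}. Also $\Q_n=\GF_n(X)$ is the class of sheaves with $\Gfd_X\le n$, taking this as the definition of Gorenstein flat dimension via $\GF$-resolutions. Moreover $\R_\W=\mathcal{W}_{\mathrm{flat}}\cap\cot(X)$, so $(\Q,\R_\W)$ is a complete hereditary cotorsion pair because $\mathcal{M}_{\GF}$ is a hereditary Hovey triple.

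With this dictionary, the equivalences (1)$\Leftrightarrow$(2) and (1)$\Rightarrow$(4) are exactly Proposition~\ref{obj in Qn} (1)$\Leftrightarrow$(2)$\Rightarrow$(3). Here $K\in\Q_{\W,n-1}=\flat_{n-1}(X)$, and $E\in\Q_{\W,n}=\flat_n(X)$ means $\fd_X(E)\le n$. The converse (4)$\Rightarrow$(1) is the last part of Proposition~\ref{obj in Qn}, using that $(\Q,\R_\W)$ is hereditary.

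For (1)$\Leftrightarrow$(3) I will invoke Proposition~\ref{obj in Qn 2}, which requires checking four hypotheses:
\begin{enumerate}
\item[(a)] $(\Q,\R_\W)$ is complete hereditary, as noted above.
\item[(b)] $\Q_\W^\perp\subseteq\R$, which holds because $\Q_\W^\perp=\flat(X)^\perp=\cot(X)=\R$.
\item[(c)] $(\flat_n(X),\flat_n(X)^\perp)$ is left complete, which is Proposition~\ref{n-flat-cot-pair}.
\item[(d)] $\mathcal{W}_{\mathrm{flat}}$ is closed under kernels of epimorphisms, which follows from thickness.
\end{enumerate}
These give (1)$\Leftrightarrow$(3), with $C$ ranging over $\mathcal{W}_{\mathrm{flat}}\cap\flat_n(X)^\perp$.

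The final equality $\flat_n(X)=\mathcal{W}_{\mathrm{flat}}\cap\GF_n(X)$ is Corollary~\ref{cor obj in Qn}. Its hypotheses are that $(\Q,\R_\W)$ is complete and that $\mathcal{W}_{\mathrm{flat}}$ is coresolving. Coresolving here means thick, plus containing the injectives or at least being cogenerating, and closed under cokernels of monomorphisms, which is what the corollary's proof actually uses.

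The main obstacle, though it is only mild, is the bookkeeping identification $\Q\cap\W=\flat(X)$ in the Gorenstein flat model structure. I would derive it from the Hovey correspondence, which gives $\Q_\W={}^\perp\R={}^\perp\cot(X)$, together with the fact that $(\flat(X),\cot(X))$ is a cotorsion pair. A secondary point is making sure that "$\Gfd_X(M)\le n$" is interpreted as $\GF(X)$-resolution dimension $\le n$.
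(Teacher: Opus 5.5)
Your proposal is correct and is essentially the paper's proof. The paper simply applies Propositions \ref{obj in Qn}, \ref{obj in Qn 2} and \ref{n-flat-cot-pair} to the Gorenstein flat model structure, with Corollary \ref{cor obj in Qn} giving the final equality, via the same identifications $\Q_\W=\flat(X)$, $\Q_{\W,n}=\flat_n(X)$ and $\R=\cot(X)$. You make these identifications and the hypothesis checks explicit, whereas the paper leaves them implicit.
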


\proof 
Apply Propositions~\ref{obj in Qn}, \ref{obj in Qn 2} and~\ref{n-flat-cot-pair} to the Gorenstein flat model structure.
\cqfd

As in the affine case, it is natural to ask how much the flat dimension differs from the Gorenstein flat dimension. As an immediate consequence of Proposition~\ref{n-GF sheaf}, we obtain a non-affine answer (compare with the affine case \cite[Proposition 3.8]{Emm24} or \cite[Theorem 3.4]{Elm24}).

\begin{cor} \label{GF-ref}
Assume that \(X\) is semi‑separated Noetherian. If \(M\) is a sheaf on \(X\), then  
\(\Gfd_X(M)\leq \fd_X(M)\), with equality when \(M\in W_{\mathrm{flat}}\).  
In particular, equality holds when \(M\) has finite flat or finite injective dimension.
\end{cor}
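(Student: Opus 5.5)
The plan is to split the statement into the inequality $\Gfd_X(M)\leq \fd_X(M)$, the equality on $W_{\mathrm{flat}}$, and the two special cases.

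\emph{The inequality.} Since every flat sheaf is Gorenstein flat, we have $\flat(X)\subseteq \GF(X)$. Hence a flat resolution of length $n$ is also a Gorenstein flat resolution of length $n$, so $\flat_n(X)\subseteq \GF_n(X)$ for every $n$. This gives $\Gfd_X(M)\leq \fd_X(M)$. If $\fd_X(M)=\infty$ there is nothing to prove.

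\emph{Equality for $M\in W_{\mathrm{flat}}$.} Suppose $M\in W_{\mathrm{flat}}$ and $\Gfd_X(M)=n<\infty$. If $\Gfd_X(M)=\infty$, the inequality already forces $\fd_X(M)=\infty$. Otherwise $M\in W_{\mathrm{flat}}\cap \GF_n(X)$, and the last statement of Proposition~\ref{n-GF sheaf} gives $W_{\mathrm{flat}}\cap\GF_n(X)=\flat_n(X)$. So $\fd_X(M)\leq n$, which combined with the inequality gives equality. This is Corollary~\ref{cor obj in Qn} applied to the Gorenstein flat model structure, where $\Q_\W=\GF(X)\cap W_{\mathrm{flat}}=\flat(X)$; that identification is the one used in Proposition~\ref{n-GF sheaf}.

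\emph{The special cases.} It remains to show that $W_{\mathrm{flat}}$ contains every sheaf of finite flat dimension and every sheaf of finite injective dimension.
\begin{itemize}
\item[(a)] Flat sheaves lie in $\GF(X)\cap W_{\mathrm{flat}}$, since $\flat(X)=\Q_\W$. Because $W_{\mathrm{flat}}$ is thick, a finite flat resolution $0\to F_k\to\cdots\to F_0\to M\to 0$ splits into short exact sequences. Applying the two-out-of-three property to these, from the left end, gives $M\in W_{\mathrm{flat}}$.
\item[(b)] Injective sheaves should be trivial. For an injective sheaf $E$, first $E\in\flat(X)^\perp$. Second, $E\in\GF(X)^\perp$ because $E$ is injective in $\Qcoh(X)$, so $\Ext^1_X(-,E)=0$. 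Hence $E\in \GF(X)^\perp=W_{\mathrm{flat}}\cap\flat(X)^\perp$, using the cotorsion pair $(\GF(X),\R_\W)$ of the Hovey triple. In particular $E\in W_{\mathrm{flat}}$. Thickness applied to a finite injective coresolution then puts every sheaf of finite injective dimension in $W_{\mathrm{flat}}$.
\end{itemize}

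The only step needing care is (b). It requires remembering that $\R_\W=\Q^\perp$ in the Hovey triple, so that injectives, being in $\GF(X)^\perp$, are trivial. Everything else follows directly from Proposition~\ref{n-GF sheaf}.
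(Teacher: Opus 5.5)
Your proposal is correct and follows essentially the same route as the paper. You get the inequality from ${\flat}(X)\subseteq{\GF}(X)$, and equality on $W_{\mathrm{flat}}$ from ${\flat}_n(X)=W_{\mathrm{flat}}\cap{\GF}_n(X)$. For the special cases you show that flat sheaves (via ${\flat}(X)={\GF}(X)\cap W_{\mathrm{flat}}$) and injective sheaves (via ${\GF}(X)^\perp=W_{\mathrm{flat}}\cap{\flat}(X)^\perp$) lie in $W_{\mathrm{flat}}$, and then conclude by thickness, exactly as the paper does.
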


\proof
The inequality \(\Gfd_X(M)\leq \fd_X(M)\) follows since every flat sheaf is Gorenstein flat. The equality when \(M\in W_{\mathrm{flat}}\) is a direct consequence of Proposition~\ref{n-GF sheaf}.

Finally, Theorem~\ref{n-GF-Hovey-triple} gives the equalities
\[
\GF(X)^\perp = W_{\mathrm{flat}} \cap \flat(X)^\perp
\qquad\text{and}\qquad
\flat_n(X) = W_{\mathrm{flat}} \cap \GF_n(X).
\]
From these, we see that the class \(W_{\mathrm{flat}}\) contains all injective sheaves and all flat sheaves. Since \(W_{\mathrm{flat}}\) is thick, it also contains every sheaf of finite injective or finite flat dimension.
\cqfd

\bigskip
Now we turn our attention to the dual situation.

\begin{lem} 
Assume that \(X\) is a quasi-compact and semi-separated scheme. Then
\[
\bigl( {}^\perp \inj_n(X),\ \inj_n(X) \bigr)
\]
is a complete and hereditary cotorsion pair.
\end{lem}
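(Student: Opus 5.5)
Since $\Qcoh(X)$ is a Grothendieck category, it has enough injectives, and $\inj(X)$ is the right half of the complete hereditary cotorsion pair $(\Qcoh(X),\inj(X))$. The plan is to show that $\inj_n(X)$ is cogenerating and closed under extensions, and that the pair $({}^\perp\inj_n(X),\inj_n(X))$ is cogenerated by a set. Completeness then follows from the small object argument, i.e.\ Eklof--Trlifaj in Grothendieck categories, or \cite[Theorem 2.4]{Hov02}-type arguments. The definition ${}^\perp\inj_n(X)$ makes the pair $({}^\perp(\inj_n(X)),\bigl({}^\perp\inj_n(X)\bigr)^\perp)$ a cotorsion pair, so the content lies in two points: the equality $\bigl({}^\perp\inj_n(X)\bigr)^\perp=\inj_n(X)$ and completeness.

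\textbf{Step 1: a set of test objects.} By dimension shifting, a sheaf $M$ lies in $\inj_n(X)$ if and only if $\Ext^{n+1}_X(G,M)=0$ for all $G$ in a generating set. One can take $G$ ranging over the quotients $G/I$ of a generator $G$, by Baer's criterion for Grothendieck categories: $E$ is injective if and only if $\Ext^1(G/I,E)=0$ for all subobjects $I\subseteq G$.

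For each such $S=G/I$, I would take an injective coresolution of a flat-generator-type resolution. More concretely, I would choose a resolution $0\to \Omega^n S\to P_{n-1}\to\cdots\to P_0\to S\to 0$ by objects of a generating set $\mathcal{G}$ (for instance, direct sums of a fixed generator). This gives $\Ext^{n+1}(S,M)\cong \Ext^1(\Omega^nS,M)$ whenever $\Ext^{i}(P_j,M)=0$ for $i\ge1$.

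That vanishing is not automatic without projectives, which is the main obstacle. To get around it, I would instead run the argument in the opposite direction. Set $\mathcal{S}=\{\Omega^n(G/I)\}$, where $\Omega^n$ is now computed via the Yoneda description: $\Ext^{n+1}(S,M)=0$ for all $S$ if and only if every $n$-th cosyzygy $L$ of $M$ in an injective coresolution $0\to M\to E^0\to\cdots\to E^{n-1}\to L\to0$ satisfies $\Ext^1(G/I,L)=0$, i.e.\ $L$ is injective.

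\textbf{Step 2: avoiding the projectives issue.} The cleaner route, and the one I would actually follow, is the following. The class $\inj_n(X)$ is closed under direct products, extensions, cokernels of monomorphisms and direct summands, and it contains the injectives. In a locally Noetherian Grothendieck category (such as $\Qcoh(X)$ with $X$ Noetherian, though here only quasi-compact semi-separated is assumed), I would also expect it to be closed under direct limits. In any case, a sheaf $M$ has $\id M\le n$ if and only if $\Ext^{n+1}(N,M)=0$ for all $N$ in a set $\mathcal{N}$ of representatives of quotients of the generator, by the Baer-type criterion applied to the $n$-th cosyzygy.

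I would then invoke the Grothendieck-category version of the Eklof--Trlifaj theorem (Saor\'\i n--\v{S}\v{t}ov\'\i\v{c}ek). It says that for a set $\mathcal{T}$ of objects such that ${}^\perp(\mathcal{T}^\perp)$ is generating, the pair $({}^\perp(\mathcal{T}^\perp),\mathcal{T}^\perp)$ is complete. For $\mathcal{T}$, I would take $\mathcal{N}$ together with suitable $n$-th ``syzygy'' objects $\Omega^n N$, constructed using the flat generator from \cite[Lemma A.1]{EP15} and the complete pair $(\flat(X),\cot(X))$. These satisfy $\Ext^1(\Omega^nN,-)\cong\Ext^{n+1}(N,-)$ on the relevant class, so that $\mathcal{T}^\perp=\inj_n(X)$. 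The generating condition is then automatic, because $\mathcal{T}\cup\{G\}$ may be used and $G$ is generating.

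\textbf{Step 3: heredity.} Hereditariness follows as in Proposition~\ref{n-flat-cot-pair}. Since $(\Qcoh(X),\inj(X))$ is complete hereditary, the dual of \cite[Proposition 5.2(1)]{BEGO25} shows that $\inj_n(X)$ is closed under cokernels of monomorphisms, and then \cite[Lemma 6.17]{Sto14} applies.

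The expected obstacle is the identification $\mathcal{T}^\perp=\inj_n(X)$ in the absence of projectives. If it fails, the fallback is a deconstructibility argument via \cite[Corollary 3.15]{EAP12}, reducing to the affine statement on each open affine $U$ just as in the flat case.
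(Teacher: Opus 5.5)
There is a genuine gap, and it cannot be closed as stated.

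\textbf{Where your argument breaks.} The gap is left completeness, i.e.\ the claim that ${}^\perp{\inj}_n(X)$ is generating. The Saor\'{\i}n--\v{S}\v{t}ov\'{\i}\v{c}ek version of Eklof--Trlifaj requires this, and left completeness itself forces it, since every object must then be a quotient of an object of the left class. Your justification is circular. Replacing $\mathcal{T}$ by $\mathcal{T}\cup\{G\}$ changes the right class to $\mathcal{T}^\perp\cap G^\perp$. This equals ${\inj}_n(X)$ only if $G\in{}^\perp{\inj}_n(X)$, which is exactly what had to be shown.

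The identification $\mathcal{T}^\perp={\inj}_n(X)$ is also not established. Dimension shifting along flat resolutions gives $\Ext^1(\Omega^nN,M)\cong\Ext^{n+1}(N,M)$ only when $\Ext^{\geq 1}(F,M)=0$ for flat $F$, i.e.\ for cotorsion $M$. In general ${\inj}_n(X)\not\subseteq{\cot}(X)$: over $\operatorname{Spec}\mathbb{Z}$, the group $\mathbb{Z}$ has injective dimension $1$ but $\Ext^1(\mathbb{Q},\mathbb{Z})\neq 0$.

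The fallback via \cite[Corollary 3.15]{EAP12} does not fit either. That result handles pairs whose \emph{left} class is defined affine-locally, as ${\flat}_n(X)$ in Proposition~\ref{n-flat-cot-pair}. Here the local condition is on the right class.

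\textbf{Why no argument can work.} For $n\geq 1$ the statement fails in this generality. Take $X=\mathbb{P}^1_k$ with $k$ an infinite field, and $n=1$.
\begin{itemize}
\item The category ${\Qcoh}(X)$ is hereditary. For coherent $N$, the local-to-global spectral sequence gives $\Ext^2_X(N,M)=0$ on a regular curve. Baer's criterion with coherent test objects then gives $\id_X(M)\leq 1$ for every $M$.
\item Hence ${\inj}_1(X)={\Qcoh}(X)$, and ${}^\perp{\inj}_1(X)$ is exactly the class of projective objects of ${\Qcoh}(X)$.
\item By Hartshorne, \emph{Algebraic Geometry}, Exercise III.6.2, no projective quasi-coherent sheaf maps onto $\mathcal{O}_X$.
\item So $\mathcal{O}_X$ has no special ${}^\perp{\inj}_1(X)$-precover. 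The pair is a hereditary cotorsion pair that is not left complete.
\end{itemize}

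The paper's own proof is only a reference to \cite[Propositions 6.2.13 and 8.2.2]{Per16}, and it cannot cover this case either. An extra hypothesis such as ``${}^\perp{\inj}_n(X)$ is generating'' is needed. That hypothesis holds for $n=0$. It also holds for affine $X$, where projectives lie in the left class and the classical argument with $n$-th syzygies of cyclic modules goes through.

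The same example shows that Theorem~\ref{GIn-cot-pair-Hov-trip} fails for $\mathbb{P}^1_k$ and $n=1$, since its cofibrant class would be ${}^\perp{\inj}_1(X)$.
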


\proof
See \cite[Propositions 6.2.13 and 8.2.2]{Per16}.
\cqfd

Recall from \cite[Definition 3.1]{EI17} that a sheaf \(M\) is called \emph{Gorenstein injective} if it is the zero cycle \(M \cong Z_0(\mathbf{E})\) of an acyclic complex \(\mathbf{E}\) of injective sheaves such that the complex \(\Hom_X(E,\mathbf{E})\) is acyclic for every injective sheaf \(E\). We denote by \(\GI(X)\) the class of all Gorenstein injective sheaves.

Based on work of Estrada and Iacob \cite[Lemmas 3.4 and 3.5]{EI17}, which in turn traces back to Krause \cite[Theorem 7.12]{Kra05}, it was recently proved by Estrada and Gillespie \cite[Theorem 5.2]{EG25} that for a semi‑separated Noetherian scheme \(X\) there exists a hereditary abelian model structure given by 
\[\M_{\GI}=
\bigl( {}^\perp\!\inj(X),\ \mathcal{W}_{\mathrm{inj}},\ \GI(X) \bigr)
\]
on \(\Qcoh(X)\). This model structure is called the \textbf{Gorenstein injective model structure}. In this case, the class of trivial objects is described by $
\mathcal{W}_{\mathrm{inj}} = {}^\perp\!\GI(X).$

The following result can be seen as a dual version of Theorem~\ref{n-GF-Hovey-triple}.

\begin{thm}[$n$-Gorenstein injective model structure] \label{GIn-cot-pair-Hov-trip}
Assume that \(X\) is semi‑\\separated Noetherian. Then there exists a hereditary abelian model structure
\[ \M_{\GI_n}=
\bigl( {}^\perp\!\inj_n(X),\ \mathcal{W}_{\mathrm{inj}},\ \GI_n(X) \bigr)
\]
on \(\Qcoh(X)\). Consequently, the pair \(\bigl({}^\perp\!\GI_n(X),\GI_n(X)\bigr)\) is a complete hereditary cotorsion pair with
$
{}^\perp\!\GI_n(X) = {}^\perp\!\inj_n(X) \cap \mathcal{W}_{\mathrm{inj}},$
and \(\GI_n(X)\) is a special preenveloping class.
\end{thm}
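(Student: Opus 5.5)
The plan is to apply part (2) of Theorem A, the dual statement, to the Gorenstein injective model structure $\M_{\GI}=({}^\perp\!\inj(X),\mathcal{W}_{\mathrm{inj}},\GI(X))$ of \cite[Theorem 5.2]{EG25}. Take $\Q={}^\perp\!\inj(X)$, $\W=\mathcal{W}_{\mathrm{inj}}$ and $\R=\GI(X)$; this is a hereditary Hovey triple on $\Qcoh(X)$. Theorem A(2) then needs only one input: that $({}^\perp\!\R_{\W,n},\R_{\W,n})$ is a complete cotorsion pair, where $\R_{\W,n}$ is the class of sheaves of $\R_\W$-coresolution dimension at most $n$.

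\textbf{Identifying $\R_\W$.} First I show that $\R_\W=\mathcal{W}_{\mathrm{inj}}\cap\GI(X)=\inj(X)$. The second associated cotorsion pair of the Hovey triple is $({}^\perp\!\inj(X)\cap\mathcal{W}_{\mathrm{inj}},\ \GI(X))$, and the first is $({}^\perp\!\inj(X),\ \mathcal{W}_{\mathrm{inj}}\cap\GI(X))$. Since $({}^\perp\!\inj(X),\inj(X))$ is itself a cotorsion pair (it is the lemma above with $n=0$), we get $\R_\W=({}^\perp\!\inj(X))^\perp=\inj(X)$. Consequently $\R_{\W,n}=\inj_n(X)$, the sheaves of injective dimension at most $n$.

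\textbf{Completeness.} The preceding lemma states that $({}^\perp\!\inj_n(X),\inj_n(X))$ is a complete hereditary cotorsion pair, which is exactly the hypothesis of Theorem A(2). Applying Theorem A(2) then gives:
\begin{itemize}
\item the hereditary Hovey triple $({}^\perp\!\inj_n(X),\mathcal{W}_{\mathrm{inj}},\GI_n(X))$, with $\R_n=\GI_n(X)$;
\item that $({}^\perp\!\GI_n(X),\GI_n(X))$ is a complete hereditary cotorsion pair;
\item the equality ${}^\perp\!\GI_n(X)=\mathcal{W}_{\mathrm{inj}}\cap{}^\perp\!\inj_n(X)$;
\item that $\GI_n(X)$ is special preenveloping, which is immediate from completeness of that pair.
\end{itemize}

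\textbf{Main obstacle.} The only delicate point is the identification $\R_\W=\inj(X)$. It relies on the stated form of the Estrada--Gillespie triple, whose cofibrant class is exactly ${}^\perp\!\inj(X)$, and on the fact that a cotorsion pair is determined by its left class. Once that is settled, the theorem is a direct specialization of Theorem A(2), just as Theorem \ref{n-GF-Hovey-triple} specializes Theorem A(1).
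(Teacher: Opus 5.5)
Your proposal is correct and matches the paper's intended argument: the paper omits the proof, but it is the dual of the proof of Theorem~\ref{n-GF-Hovey-triple}, namely applying Theorem A(2) together with the lemma on $\bigl({}^\perp{\inj}_n(X),{\inj}_n(X)\bigr)$ to the Estrada--Gillespie triple. Your check that $\mathcal{W}_{\mathrm{inj}}\cap{\GI}(X)=({}^\perp{\inj}(X))^\perp={\inj}(X)$, so that $\R_{\W,n}={\inj}_n(X)$, is a step the paper leaves implicit, and it is sound. It is in fact immediate, since ${}^\perp{\inj}(X)$ is all of $\Qcoh(X)$.
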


\begin{prop} \label{n-GI sheaf}  Assume $X$ is a  semi-separated Noetherian scheme. The following assertions are equivalent for any sheaf $M$:
\begin{enumerate}
    \item $\Gid_X(M)\leq n.$

 \item There exists a $\Hom_X(\,^\perp\inj_n(X),-)$-exact exact sequence of sheaves $$0\to M\to E\to L\to 0$$
where $L\in \inj_{n-1}(X)$ and $E\in \GI(X)$. If $n=0$, this is understood to mean $L\in \inj(R)$.
\item $\Ext^1_X(C,M)=0$ for every sheaf  $C\in \,^\perp \inj_n(X) \cap {}^\perp\GI(X)$.

\item There exists an exact sequence of sheaves $$0\to K\to E\to M\to 0$$
where $E\in\inj_n(X)$ and $K\in \GI(X)$.
	
\end{enumerate}

 Consequently, $\inj_n(X)={}^\perp\GI(X)\cap \GI_n(X).$
\end{prop}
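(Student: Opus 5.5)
This is the dual of Proposition~\ref{n-GF sheaf}. I would prove it by applying the dual versions of Propositions~\ref{obj in Qn} and~\ref{obj in Qn 2}, and of Corollary~\ref{cor obj in Qn}, which the Remark at the end of Section~\ref{Main result} allows us to use. The input is the Gorenstein injective model structure $\M_{\GI}=({}^\perp\inj(X),\W_{\mathrm{inj}},\GI(X))$ of \cite[Theorem 5.2]{EG25}, with $\R=\GI(X)$, $\Q={}^\perp\inj(X)$ and $\W=\W_{\mathrm{inj}}={}^\perp\GI(X)$.

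The first step identifies the dual data. Since $\M_{\GI}$ is a hereditary Hovey triple, $({}^\perp\inj(X),\GI(X))$ and $(\Q_\W,\R)$ are complete hereditary cotorsion pairs. The second is then $({}^\perp\GI(X),\inj(X))$, so $\R_\W=\W\cap\GI(X)=\inj(X)$. Hence $\R_{\W,n}=\inj_n(X)$, which is the class of sheaves of injective dimension at most $n$, and ${}^\perp\R_{\W,n}={}^\perp\inj_n(X)$.

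The second step checks the hypotheses of the dual of Proposition~\ref{obj in Qn 2}.
\begin{enumerate}
\item $(\Q_\W,\R)$ is complete and hereditary, as noted above.
\item The dual of $\Q_\W^\perp\subseteq\R$ is ${}^\perp\R_\W\subseteq\Q$, which reads ${}^\perp\inj(X)\subseteq{}^\perp\inj(X)$ and is trivial.
\item $({}^\perp\inj_n(X),\inj_n(X))$ is right complete (indeed complete and hereditary) by the preceding Lemma.
\item $\W$ is closed under cokernels of monomorphisms because it is thick.
\end{enumerate}

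The third step reads off the equivalences.
\begin{itemize}
\item The dual of Proposition~\ref{obj in Qn}, (1)$\Leftrightarrow$(2), gives (1)$\Leftrightarrow$(2): a $\Hom_X({}^\perp\inj_n(X),-)$-exact sequence $0\to M\to E\to L\to 0$ with $E\in\GI(X)$ and $L\in\inj_{n-1}(X)$.
\item The dual of Proposition~\ref{obj in Qn}, (1)$\Leftrightarrow$(3), gives (1)$\Leftrightarrow$(4), since the dual of the hereditary hypothesis holds. This is a sequence $0\to K\to E\to M\to 0$ with $E\in\inj_n(X)$ and $K\in\GI(X)$.
\item The dual of Proposition~\ref{obj in Qn 2} gives (1)$\Leftrightarrow$(3), with test class $\W\cap{}^\perp\R_{\W,n}={}^\perp\GI(X)\cap{}^\perp\inj_n(X)$.
\end{itemize}

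Finally, the dual of Corollary~\ref{cor obj in Qn} requires $\W$ to be resolving. This holds because $\W$ is thick and $\Q$ is generating via the complete pair $(\Q,\R_\W)$, so $\W\supseteq\Q_\W$ is generating. The corollary then gives $\R_{\W,n}=\W\cap\R_n$, that is, $\inj_n(X)={}^\perp\GI(X)\cap\GI_n(X)$.

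The main obstacle is bookkeeping: making sure each dual hypothesis matches, in particular the identification $\W\cap\GI(X)=\inj(X)$. This follows from $\W_{\mathrm{inj}}={}^\perp\GI(X)$ together with the fact that $\GI(X)\cap{}^\perp\GI(X)$ is the core of the pair $({}^\perp\GI(X),\GI(X))$. By the Hovey correspondence, that core is $\Q_\W\cap\R\cap\W$, which also equals the core of the first pair, namely ${}^\perp\inj(X)\cap\inj(X)=\inj(X)$.
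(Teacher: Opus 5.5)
Your proposal is correct and follows the paper's own route. The paper proves this statement by applying the duals of Propositions~\ref{obj in Qn} and~\ref{obj in Qn 2} (and of Corollary~\ref{cor obj in Qn} for the final equality) to $\M_{\GI}$, together with the lemma that $({}^\perp\inj_n(X),\inj_n(X))$ is complete and hereditary.

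One slip needs fixing. The two cotorsion pairs of $\M_{\GI}$ are $({}^\perp\inj(X),\inj(X))$ and $({}^\perp\GI(X),\GI(X))$, not the pairs $({}^\perp\inj(X),\GI(X))$ and $({}^\perp\GI(X),\inj(X))$ that you wrote. The identification $\R_\W=\inj(X)$ should be derived directly: $\R_\W=\Q^\perp=({}^\perp\inj(X))^\perp=\Qcoh(X)^\perp=\inj(X)$, using ${}^\perp\inj(X)=\Qcoh(X)$. Your core argument already presupposes this equality, so as written it is circular.
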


In the following, we show that the Gorenstein injective dimension of sheaves is a refinement of the usual injective dimension.

Unlike the affine case (see \cite[Corollary 3.5(1)]{Elm25}), the fact that the class \(W_{\mathrm{inj}} = {}^\perp\!\GI(X)\) contains all flat sheaves (and therefore all sheaves with finite flat dimension) cannot be deduced from closure under direct limits because flat sheaves are not generally direct limits of projective sheaves. Instead, we overcome this obstacle by invoking a recent non-trivial result of Christensen, Estrada, and Thompson \cite[Theorem 3.3]{CET21}: every acyclic complex of cotorsion sheaves has cotorsion cycles.

\begin{cor} \label{GI-ref}
Assume that \(X\) is a semi‑separated Noetherian scheme. If \(M\) is a sheaf on \(X\), then
$
\Gid_X(M)\le \id_X(M),$ 
and equality holds whenever \(M\in {}^\perp\!\GI(X)\). In particular, equality holds when \(M\) has finite injective or finite flat dimension.
\end{cor}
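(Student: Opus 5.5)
The proof will mirror Corollary \ref{GF-ref}, using Proposition \ref{n-GI sheaf} together with the consequence $\inj_n(X)={}^\perp\GI(X)\cap \GI_n(X)$. The argument has three steps.

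The first step is the inequality $\Gid_X(M)\le \id_X(M)$. This is immediate, since every injective sheaf is Gorenstein injective (take the complex $0\to E\xrightarrow{=}E\to 0$), so $\inj_n(X)\subseteq\GI_n(X)$ for every $n$.

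The second step is equality for $M\in{}^\perp\GI(X)=\mathcal{W}_{\mathrm{inj}}$. If $\Gid_X(M)=\infty$ there is nothing to prove. Otherwise put $n=\Gid_X(M)$. Then $M\in \mathcal{W}_{\mathrm{inj}}\cap \GI_n(X)=\inj_n(X)$ by the last assertion of Proposition \ref{n-GI sheaf}, which is the dual of Corollary \ref{cor obj in Qn} applied to the Gorenstein injective model structure. Hence $\id_X(M)\le n$.

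The third step, and the real content, is to show that $\mathcal{W}_{\mathrm{inj}}$ contains every sheaf of finite injective dimension and every sheaf of finite flat dimension.
\begin{itemize}
\item Since $\mathcal{W}_{\mathrm{inj}}$ is thick, it suffices to show that it contains all injective sheaves and all flat sheaves.
\item Injective sheaves lie in ${}^\perp\GI(X)$. Indeed, $\inj(X)\subseteq\GI(X)\cap{}^\perp\GI(X)$, because the core of $({}^\perp\GI(X),\GI(X))$ contains the injectives. Alternatively, $\inj(X)=\mathcal{W}_{\mathrm{inj}}\cap\GI(X)$ is the case $n=0$ of the consequence above.
\item For a flat sheaf $F$ and $G\in\GI(X)$, I want $\Ext^1_X(F,G)=0$. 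I plan to show that every Gorenstein injective sheaf is cotorsion; then $\Ext^1_X(F,G)=0$ follows because $(\flat(X),\cot(X))$ is a cotorsion pair.
\item Write $G=Z_0(\mathbf{E})$ for an acyclic complex $\mathbf{E}$ of injective sheaves. Injective quasi-coherent sheaves are cotorsion: they are right orthogonal to everything in $\Qcoh(X)$.
\item So $\mathbf{E}$ is an acyclic complex of cotorsion sheaves, and by \cite[Theorem 3.3]{CET21} all its cycles are cotorsion. In particular $G$ is cotorsion, as required.
\end{itemize}

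The main obstacle is this flat case, which is exactly where the affine direct-limit argument fails. The Christensen–Estrada–Thompson theorem replaces it, since Gorenstein injectivity only provides an acyclic injective complex. Thickness of $\mathcal{W}_{\mathrm{inj}}$ (closure under extensions and under kernels and cokernels of monomorphisms and epimorphisms) then passes from flats to finite flat dimension through a finite flat resolution, and from injectives to finite injective dimension through a finite injective coresolution. Combined with the second step, this gives the final assertion.
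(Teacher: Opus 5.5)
Your proposal is correct and follows the paper's own proof essentially step for step. The inequality comes from $\inj(X)\subseteq\GI(X)$, and equality on ${}^\perp\GI(X)=\mathcal{W}_{\mathrm{inj}}$ follows from $\inj_n(X)={}^\perp\GI(X)\cap\GI_n(X)$; flat sheaves lie in ${}^\perp\GI(X)$ because Gorenstein injective sheaves are cotorsion by the Christensen--Estrada--Thompson theorem on cycles of acyclic complexes of cotorsion sheaves, and thickness of $\mathcal{W}_{\mathrm{inj}}$ then handles finite flat and finite injective dimension.
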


\proof  The argument follows that of Corollary~\ref{GF-ref}, except for one key point: we must justify why \({}^\perp\!\GI(X)\) contains flat sheaves (and hence sheaves with finite flat dimension). This is equivalent to showing that every Gorenstein injective sheaf is cotorsion.

Let \(M\) be a Gorenstein injective sheaf. By definition, \(M\cong Z_0(\mathbf{E})\) for some acyclic complex \(\mathbf{E}\) of injective sheaves. Every injective sheaf is cotorsion, so \cite[Theorem 3.3]{CET21} implies that every cycle of \(\mathbf{E}\) is cotorsion; in particular, \(M\) is cotorsion.  \cqfd

 Unlike the affine case (see \cite[pages 22 and 23]{SS20}) and \cite[Theorem B]{Elm24}), the class $W_{\mathrm{flat}} $ of trivial objects in the ($n$-)Gorenstein flat model structure is not yet well described.  We conclude this article with a partial answer to this problem, due mainly to Estrada and Gillespie \cite{EG25}.

 Recall that a commutative Noetherian ring is called (Iwanaga--)Gorenstein if it has finite self-injective dimension. A scheme $X$ is Gorenstein provided that $\O_{X,x}$ is a Gorenstein ring for every $x\in X$.

\begin{cor} \label{Gorenstein-trivial-class}
Assume that \(X\) is a semi-separated Gorenstein scheme of finite Krull dimension \(d\). Then
$
W_{\mathrm{flat}} = \flat_d(X) = \inj_d(X) = W_{\mathrm{inj}}.$ 
Consequently, for every \(n\ge 0\) there are equivalences of triangulated categories
\[
\operatorname{Ho}\bigl(\mathcal{M}_{\GF_n}\bigr) \simeq
\operatorname{Ho}\bigl(\mathcal{M}_{\GF}\bigr) \simeq
\operatorname{Ho}\bigl(\mathcal{M}_{\GI}\bigr) \simeq
\operatorname{Ho}\bigl(\mathcal{M}_{\GI_n}\bigr).
\]
\end{cor}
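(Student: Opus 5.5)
We need to prove Corollary \ref{Gorenstein-trivial-class}: for $X$ semi-separated Gorenstein of finite Krull dimension $d$, $W_{\mathrm{flat}}=\flat_d(X)=\inj_d(X)=W_{\mathrm{inj}}$, together with the equivalences of homotopy categories.

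The plan is to establish the middle equality $\flat_d(X)=\inj_d(X)$ first. This is a statement about sheaves over a Gorenstein scheme of dimension $d$, and I will take it from Estrada and Gillespie \cite{EG25}, where it is proved by reducing to the affine pieces. The reduction runs as follows. Finite flat dimension of a quasi-coherent sheaf is detected on an affine cover, as in the first step of Proposition \ref{n-flat-cot-pair}. On a Noetherian scheme, injectivity and finite injective dimension are local, since Noetherianity makes injective quasi-coherent sheaves restrict to injectives. Over an Iwanaga–Gorenstein ring $R$ of Krull dimension at most $d$, the classical Iwanaga–Jensen result gives $\flat_d(R)=\inj_d(R)$; it is the class of modules of finite flat, equivalently finite injective, dimension, and such dimensions are bounded by $d$. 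Since $X$ is Gorenstein, each $\O_X(U)$ for an affine open $U$ is Gorenstein of dimension at most $d$.

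Next I identify the trivial classes. Over a Gorenstein scheme, \cite{EG25} shows that the Gorenstein injective model structure is the one whose cotorsion pair $({}^\perp\GI(X),\GI(X))$ has ${}^\perp\GI(X)$ equal to the class of sheaves of finite injective dimension, which is $\inj_d(X)$. This gives $W_{\mathrm{inj}}=\inj_d(X)$. For $W_{\mathrm{flat}}$ I argue by inclusions.

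For $\flat_d(X)\subseteq W_{\mathrm{flat}}$, I use Corollary \ref{GF-ref}: $W_{\mathrm{flat}}$ contains every sheaf of finite flat dimension.

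For the reverse inclusion, I use that on a Gorenstein scheme of finite dimension every sheaf $M$ has $\Gfd_X(M)\le d$. This is a standard fact, deduced from \cite{EG25} or by the usual syzygy argument with $\flat_d(X)=\inj_d(X)$. Then Proposition \ref{n-GF sheaf} applies with $n=d$, and its final clause $\flat_d(X)=W_{\mathrm{flat}}\cap\GF_d(X)$ becomes $\flat_d(X)=W_{\mathrm{flat}}$ because $\GF_d(X)$ is everything.

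Alternatively, I can compare the two Hovey triples directly. Once $\GF(X)^\perp=W_{\mathrm{flat}}\cap\cot(X)$ is known to consist of cotorsion sheaves of finite flat dimension, thickness of $W_{\mathrm{flat}}$ together with a special $\GF$-precover argument shows that any $M\in W_{\mathrm{flat}}$ has finite flat dimension.

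The step I expect to be the main obstacle is justifying that every sheaf has Gorenstein flat dimension at most $d$, that is, pinning down precisely which statement of \cite{EG25} gives this. If that is unavailable, I would argue as follows. Take a flat resolution of $M$ and let $K$ be its $d$-th syzygy. Then $K$ is flat, and I would show $K$ is Gorenstein flat by splicing a flat resolution of $K$ with a flat coresolution obtained from $\inj_d=\flat_d$. Tensor-acyclicity with injectives follows because injective sheaves have finite flat dimension.

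For the homotopy categories, the remark after Theorem \ref{n-GF-Hovey-triple} gives $\Ho(\M_{\GF_n})\simeq\Ho(\M_{\GF})$, and dually Theorem \ref{GIn-cot-pair-Hov-trip} gives $\Ho(\M_{\GI_n})\simeq\Ho(\M_{\GI})$. The model structures $\M_{\GF}$ and $\M_{\GI}$ have the same trivial class $W_{\mathrm{flat}}=W_{\mathrm{inj}}$. Each homotopy category is the Verdier-type localization $\Qcoh(X)/\W$ of $\Qcoh(X)$ at the morphisms whose factorization involves $\W$, so it depends only on the weak equivalences. The identity functor therefore induces the middle equivalence $\Ho(\M_{\GF})\simeq\Ho(\M_{\GI})$, and composing the three equivalences completes the chain.
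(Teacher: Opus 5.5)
Your argument is sound in outline, but it takes a different route from the paper. The paper's proof is a single citation: all of $W_{\mathrm{flat}}=\mathrm{Flat}_d(X)=\mathrm{Inj}_d(X)=W_{\mathrm{inj}}$ is read off from \cite[Proposition 9.5 and the remark following it]{EG25}. The homotopy equivalences then follow from the common class of trivial objects, exactly as in your last paragraph. You import only the injective side from \cite{EG25}, together with a sound reduction of $\mathrm{Flat}_d(X)=\mathrm{Inj}_d(X)$ to the rings $\mathcal{O}_X(U)$, which are Iwanaga--Gorenstein of self-injective dimension at most $d$. You then derive $W_{\mathrm{flat}}=\mathrm{Flat}_d(X)$ from the paper's own results. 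One inclusion comes from Corollary~\ref{GF-ref}. The other comes from the identity $\mathrm{Flat}_d(X)=W_{\mathrm{flat}}\cap\mathrm{GFlat}_d(X)$ of Proposition~\ref{n-GF sheaf}, once you know $\mathrm{GFlat}_d(X)=\mathrm{Qcoh}(X)$. This shows that the flat half of the corollary needs only one outside input, a uniform bound $\mathrm{Gfd}_X(M)\le d$. The price is that this bound, which the paper never has to state, becomes the step everything rests on.

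Your fallback proof of that bound does not work as written. The $d$-th syzygy $K$ of a flat resolution of $M$ is flat only when $\mathrm{fd}_X(M)\le d$; if it were always flat there would be nothing to prove. Producing a flat coresolution of $K$ in $\mathrm{Qcoh}(X)$ with the required Tor-vanishing is also not routine without projectives or character duality. Argue locally instead:
\begin{itemize}
\item On each affine open $U$, the resolution restricts to a flat resolution of $M(U)$ over $\mathcal{O}_X(U)$, where every module has Gorenstein flat dimension at most $d$. Hence $K(U)$ is Gorenstein flat.
\item Gorenstein flatness of sheaves on a semi-separated Noetherian scheme can be checked on affine opens. This is the same locality the Remark after Theorem~\ref{n-GF-Hovey-triple} relies on to treat $\mathrm{GFlat}_n(X)$ like $\mathrm{Flat}_n(X)$. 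So $K$ is Gorenstein flat.
\end{itemize}

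Finally, the precise reason each homotopy category depends only on the trivial class is Hovey's description of weak equivalences in an abelian model structure. They are the maps that factor as a monomorphism with cokernel in $\mathcal{W}$ followed by an epimorphism with kernel in $\mathcal{W}$. Since $\mathrm{Qcoh}(X)$ is not triangulated, ``Verdier localization'' is not the right description.
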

\proof 
The equalities follow from \cite[Proposition 9.5 and the remark following it]{EG25}. Hence the (\(n\)-)Gorenstein flat model structure and the (\(n\)-)Gorenstein injective model structure on \(\Qcoh(X)\) share the same class of trivial objects; therefore, their homotopy categories are equivalent as triangulated categories.  \cqfd

\end{document}